\newcommand{\fin}{\hspace*{\fill}$\square$\vspace*{2mm}}
\theoremstyle{plain}
\newtheorem{theorem}{Theorem}[section]
\newtheorem{lemma}[theorem]{Lemma}
\newtheorem{proposition}[theorem]{Proposition}
\theoremstyle{definition}
\newtheorem{definition}[theorem]{Definition}
\theoremstyle{remark}
\def\bC{{\mathbb C}}
\def\bN{{\mathbb N}}
\def\bR{{\mathbb R}}
\def\Si{{ \mathcal{S}_{\infty}}}
\def\Ker{{\rm Ker}}
\def\dist{{\rm dist}}
\def\Sing{{\rm Sing}}
\def\const.{{\rm const.}}
\title[A Thom Isotopy Theorem for nonproper semialgebraic maps]{A Thom Isotopy Theorem for nonproper semialgebraic maps}
\author{Luis Renato Gonçalves  Dias}
\address{Universidade Federal de Uberl\^andia, Faculdade de Matem\'atica, Av. Jo\~ao Naves de \'Avila 2121, 1F-153 - CEP: 38408-100, Uberl\^andia, Brazil.}
\email{lrgdias@ufu.br}
\author{Giovanny Snaider Barrera Ramos}
\address{Universidade de S\~ao Paulo, ICMC-USP, Av. Trab. São Carlense, 400-Centro,  CEP: 13566-590, S\~ao Carlos-SP, Brazil.}
\email{giovannybarreramos@usp.br}
\date {\today}
\keywords{bifurcation set, semialgebraic sets, semialgebraic maps, stratified  $\rho$ non-regular values set, Verdier stratification}
\subjclass{58K30, 14B05}
\begin{document}
	
\begin{abstract} 
We prove a version of the Thom Isotopy Theorem for nonproper semialgebraic maps $f\colon X\rightarrow \bR^m$, where $X \subset\bR^n$ is a semialgebraic set and $f$ is the restriction to $X$ of a smooth semialgebraic map $F\colon\bR^n\rightarrow \bR^m$.  
\end{abstract}

\maketitle
  
\section{Introduction}

Let $F\colon\bR^n \rightarrow \bR^m$ be a   semialgebraic map. We say that $F$ is  locally trivial at $t_0\in\bR^m$ if there exist a neighborhood $U$ of $t_0$ and  a diffeomorphism $h\colon F^{-1}(t_0)\times U\rightarrow F^{-1}(U)$ such that $F\circ h=pr_2$, where $pr_2\colon F^{-1}(t_0)\times U\rightarrow U$ denotes the second projection. The set of the points at which the  local triviality of $F$ fails is called the bifurcation set of $F$ and is denoted by $B(F)$.  It is well known that $B(F)$ is the union of the singular values set of $F$, denoted by $F(\Sing F)$,   and the set of bifurcation values at infinity of $F$. 

A full characterisation of $B(F)$ is a challenging open problem. In order to estimate $B(F)$, we can use conditions that control the asymptotic behaviour of the fibres of $F$. Rabier \cite{Ra} considered the asymptotic critical values set $K_\infty(F)$ defined by
\begin{align}\label{eq:kity}
K_\infty(F):=&\{t\in\bR^m\mid \exists (x_j)_{j\in \bN}\subset\bR^n, {\lim_{j\rightarrow\infty}\|x_j\|}=\infty\\ \nonumber 
&{\lim_{j\rightarrow\infty}F(x_j)}=t\text{ and }{\lim_{j\rightarrow\infty}\|x_j\|\nu(d_{x_j}F) }=0\},
\end{align}
where $\nu(d_xF):=\inf_{\|\varphi\|=1}\|d_xF^*(\varphi)\|$ and $d_xF^*$ denotes  the adjoint  of $d_xF$. From \cite{Ra}, see also \cite{Ga}, \cite{Je} and \cite{KOS}, it follows that  $ B(F)\subset F(\Sing F)\cup K_\infty(F)$.  In fact,   Rabier's result applies to $C^2$ maps $g\colon M\rightarrow N$, where $M$ and $N$ are Finsler Manifolds. 

In \cite{DRT}, the authors considered the following set: 
\begin{equation}\label{eq:Sity}
\mathcal{S}(F)=\{y\in\bR^m\mid\exists (x_j)_{j\in\bN}\subset M(F), \lim_{j\rightarrow \infty}\|x_j\|=\infty\text{ and }\lim_{j\rightarrow \infty} F(x_j)=y\},
\end{equation}
where $M(F)$ is the critical locus of the map  $(F,\rho):\bR^n \to \bR^{m+1}$, with $\rho(x_1, \ldots, x_n)=x_1^2 + \ldots +x_n^2$. From \cite{DRT}, it follows that $\mathcal{S}(F)\subset K_\infty(F)$ and $B(F)\subset F(\Sing F) \cup \mathcal{S}(F)$. We  call   $F(\Sing F) \cup \mathcal{S}(F)$ the  $\rho$ non-regular values set of $F$.  

We remark that the results presented above hold when we consider the restriction of $F$ to  a smooth semialgebraic set $X$.    

If $X$ is a singular set and $\mathcal{W}$ is a Whitney stratification of $X$, the Thom  Isotopy Theorem (see for instance \cite{GWPL}) ensures that every smooth map $f\colon X\to \bR^p$ is locally trivial on any $y\in \bR^p$, provided that the restriction of $f$ to any stratum of $\mathcal{W}$ is a submersion and the restriction of $f$ to the closure of any stratum is a proper map.

Recently, in the complex case, {\DH}inh and Jelonek \cite{DJ}  considered the singular case as follows. Let  $X\subset\bC^n$ be an affine variety and  let $F\colon\bC^n\rightarrow\bC^m$ be a polynomial map. Let $f\colon X\rightarrow\bC^m$ be the restriction of $F$ to $X$ and let  $\mathcal{W}=(X_\alpha)_{\alpha\in J}$ be a Whitney stratification of $X$. Motivated by \eqref{eq:kity}, the authors defined the  stratified generalized critical values set of $f$, denoted by  $\mathcal{K}(f,\mathcal{W} )$, and proved that   $B(f)\subset \mathcal{K}(f,\mathcal{W})$. In the real case, Anna Valette \cite{Va} considered stratified submersion $f\colon (X,\Sigma)\rightarrow Y$, where $X$ and $Y$  are semialgebraic sets, and proved that  $B(f) \subset 
 K_\Sigma(f)=\bigcup_{S\in \Sigma}K_\infty(f_{|_{S}})$. The trivialization obtained in \cite{Va} is a semialgebraic trivialization.   In particular, the results of \cite{DJ} and \cite{Va} can be seen as a extension of the  Thom Isotopy Theorem for nonproper maps.  

In this paper, we consider a  semialgebraic set $X\subset\bR^n$ and a Verdier $(w)$ stratification $\mathcal{W}=(X_\alpha)_{\alpha\in J}$ of $X$, see Section \ref{s:2} for details. Given a smooth semialgebraic map $F\colon\bR^n\rightarrow\bR^m$, we consider $f\colon X\rightarrow\bR^m$ the restriction of $F$ to $X$. Motivated by \eqref{eq:Sity}, we define the stratified $\rho$ non-regular values of $f$, denoted by $\mathfrak{S}(f,\mathcal{W})$, see Definition \ref{defconjmilnor}. Then, we prove that  $B(f)\subset \mathfrak{S}(f,\mathcal{W})$, see Theorem \ref{t:main}.  We also prove a trivialization theorem for $f$ outside a compact set in Theorem \ref{t:main-ity}.  

The proof of Theorem \ref{t:main-ity} is motivated by the arguments presented in  \cite{DJ}. We have constructed $m$ rugose vector fields on $X$. These are defined on the complement of a sphere of sufficiently large radius $R$.  They are tangent to $(n-1)$-dimensional spheres, and by integrating the $m$ vector fields we obtain the diffeomorphism trivializing $f$ outside a compact set. In the proof of Theorem \ref{t:main}, we defined $m$ rugose vector fields on the ball of radius $R$. We glue these $2m$ rugose fields to obtain $m$ rugose vector fields on $X$ which trivialize $f$. 

In Section \ref{s:2} we introduce the Verdier stratification, rugose vector fields and results for developing our objective.  In section \ref{s:3}, we proof our main results, namely Theorems \ref{t:main}  and \ref{t:main-ity}. 

 We point out that it follows from \cite{DRT} that $\mathfrak{S}(f,\mathcal{W})\subseteq \mathcal{K}(f,\mathcal{W})$. So, our result refines the results of \cite{DJ, Va}.
 
\section{Stratification and rugose vector fields}\label{s:2}

\subsection{Stratification}

\begin{definition}\label{d:stratification}
Let $X$ be a closed subset of $\bR^n$. A stratification of $X$ is a collection $\mathcal{W}=(X_\alpha)_{\alpha\in J}$ of pairwise disjoint connected smooth submanifolds of $\bR^n$ satisfying the following conditions
\begin{itemize}
    \item[i)] $X=\bigcup_{\alpha\in I}X_\alpha$.
    \item[ii)] $\mathcal{W}$ is locally finite. 
    \item[iii)] the frontier condition:  $X_\beta\cap\overline{X}_\alpha\neq\emptyset$ implies   $X_\beta\subset\overline{X}_\alpha$.    
\end{itemize}
Each ${X}_\alpha$ is called a stratum of $\mathcal{W}$. If, in addition $X$, $\overline{X}_\alpha$ and $\overline{X}_\alpha\setminus X_\alpha$ are semialgebraic sets, the stratification $\mathcal{W}$ is called an affine stratification of $X$. 
\end{definition}

\subsection{Whitney stratification}

Let $X$ be a closed subset of $\bR^n$ and $\mathcal{W}=(X_\alpha)_{\alpha\in J}$ a stratification of $X$. We start by recalling the conditions of Whitney, see \cite[Sections 18-19]{Wh} or \cite[Chapter 1]{GWPL}.   
 


Let $x,y\in\bR^n$. We denote by $\overline{xy}$ the line passing through the origin and  parallel to the line formed by the points $x$ and $y$. 

\begin{definition}\label{def Whitneyb}
Let $X_\alpha, X_\beta$ be strata of $\mathcal{W}$ such that $X_{\beta}\subset\overline{X}_\alpha$ and let $y \in X_\beta$. The pair $(X_\alpha,X_\beta)$ is Whitney $(b)$ regular at $y \in X_\beta$, provided that for any sequence $(x_n)_{n\in\mathbb{N}}\subset X_\alpha$ such that ${\lim_{n\rightarrow\infty}}x_n=y$ and ${\lim_{n\rightarrow\infty}}T_{x_n}X_\alpha=T$, and for any sequence $(y_n)_{n\in\bN}\subset X_\beta$ such that ${\lim_{n\rightarrow\infty}}y_n=y$ and ${\lim_{n\rightarrow\infty}}\overline{x_ny_n}=l$, we have $l\subset T$.

The pair $(X_\alpha,X_\beta)$ is Whitney $(b)$ regular if $(X_{\alpha},X_{\beta})$ is Whitney $(b)$ regular at any $y\in X_\beta$. The stratification $\mathcal{W}$ is a Whitney $(b)$ stratification  if $(X_\alpha,X_\beta)$ is Whitney $(b)$ regular for any strata $X_\alpha, X_\beta$ of $\mathcal{W}$ such that $X_{\beta}\subset\overline{X}_\alpha$. 
\end{definition}

\begin{definition}\label{defdis}
Let $V_1, V_2$ be vector subspaces of $\bR^n$. We define $$\delta(V_1, V_2)={\sup_{{a\in V_1}; \text{ }{\|a\|=1}}}\|a-\pi_{V_2}(a)\|,$$
where $\pi_{V_2}$ denotes the orthogonal projection from $\bR^n$ to $V_2$.
\end{definition}

\begin{definition}\label{def verdierw}
Let $X_\alpha, X_\beta$ be strata of $\mathcal{W}$ such that $X_{\beta}\subset\overline{X}_\alpha$ and let $y\in X_\beta$. The pair $(X_\alpha,X_\beta)$ is Verdier $(w)$ regular at $y\in X_{\beta}$, provided that there exist a constant $C>0$ and a neighborhood $U$ of $y$ in $\bR^n$ such that for any $y'\in U\cap X_\beta$ and any $x\in U\cap X_\alpha$, we have 
\begin{equation}
\delta(T_{y'}X_\beta,T_{x}X_\alpha)\leq C\|y' -x\|.    
\end{equation}  
The pair $(X_\alpha,X_\beta)$ is Verdier $(w)$ regular if $(X_{\alpha},X_\beta)$  is Verdier $(w)$ regular at any $y\in X_\beta$. The stratification $\mathcal{W}$ is a Verdier $(w)$ stratification  if $(X_\alpha,X_\beta)$ is  Verdier $(w)$ regular  for any strata $X_\alpha, X_\beta$ of $\mathcal{W}$ such that $X_{\beta}\subset\overline{X}_\alpha$. 
\end{definition}

We have that any semialgebraic set $X \subset \bR^n$ admits a Whitney $(b)$ stratification whose strata are semialgebraic sets, see  
 \cite{Leloi} or Chapter I of \cite{GWPL}. Also, any semialgebraic set  $X \subset \bR^n$ admits a Verdier $(w)$ stratification $\mathcal{W}$ whose strata are semialgebraic sets, see for instance \cite{Leloi}.  

 Let $X$ be a semialgebraic set and let $g\colon X\rightarrow \bR$ be a semialgebraic function.   Let $\mathcal{W}$ be an affine stratification of $X$. For any  $X_\beta\in \mathcal{W}$, we denote by $g_{|_{X_\beta}}$ the restriction of $g$ to $X_\beta$ and we set $T_{x,g}:=kerD(g_{|_{X_\beta}})(x)$. 
 
 \begin{definition}
Let $X\subset \bR^n$ be a semialgebraic set and let $g\colon X\rightarrow \bR$ be a semialgebraic function. Let $\mathcal{W}$ be an affine stratification of $X$ such that for any stratum $X_\gamma \in \mathcal{W}$, the restriction $g_{|_{X_\gamma}}$ is of constant rank. Let $X_\alpha,X_\beta\in \mathcal{W}$ be such that $X_\beta\subset \overline{X}_\alpha$. We say that $(X_\alpha,X_\beta)$ satisfies the strict Thom condition $(w_f)$ at $y \in X_{\beta}$ if there exist a constant $C>0$ and a neighborhood $U$ of $y$ in $\bR^n$  such that
\begin{equation*}
\delta(T_{x,f}, T_{y',f}) \leq C\|x-y'\|,    
\end{equation*}
for any  $x\in X_\alpha\cap U \mbox{ and any } y'\in X_\beta\cap U$.

The pair $(X_\alpha,X_\beta)$ is $(w_f)$ regular if $(X_{\alpha},X_\beta)$  is $(w_f)$ regular at any $y\in X_\beta$.  
\end{definition}

We have the following result, see   \cite[Section 8]{PP} or \cite[Section 1]{HMS}.  

\begin{proposition}\label{teowf}  
Let $X\subset \bR^n$ be a semialgebraic set and let $g\colon X\rightarrow \bR$ be a semialgebraic function. Let $\mathcal{W}$ be an affine Verdier $(w)$ stratification of $X$ such that for any stratum $X_\gamma \in \mathcal{W}$, the restriction $g_{|_{X_\gamma}}$ is of constant rank. Let $X_\alpha, X_\beta\in \mathcal{W}$ be strata of $X$ such that $X_\beta\subset \overline{X}_\alpha$.  If the restriction  $g_{|_{X_\beta}}$ is a submersion, then $(X_\alpha, X_\beta)$ is $(w_f)$ regular. 
\fin
\end{proposition}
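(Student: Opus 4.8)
The plan is to reduce the Verdier $(w)$ condition on the pair $(X_\alpha, X_\beta)$, together with the submersivity of $g_{|_{X_\beta}}$ and the constant-rank hypothesis on each stratum, to the estimate $\delta(T_{x,f}, T_{y',f}) \le C\|x - y'\|$ that defines $(w_f)$-regularity. The first step is to unpack the definitions: by hypothesis $g_{|_{X_\alpha}}$ has constant rank, so $T_{x,f} = \ker D(g_{|_{X_\alpha}})(x)$ is a linear subspace of $T_x X_\alpha$ of constant dimension $\dim X_\alpha - \rk(g_{|_{X_\alpha}})$, depending smoothly (indeed semialgebraically) on $x \in X_\alpha$; likewise $T_{y',f} = \ker D(g_{|_{X_\beta}})(y') \subset T_{y'}X_\beta$, and since $g_{|_{X_\beta}}$ is a submersion onto $\bR$, this kernel has codimension exactly $1$ in $T_{y'}X_\beta$. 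So I must compare a hyperplane-in-$T_{y'}X_\beta$ with a constant-dimensional subspace of $T_x X_\alpha$, controlling the gap linearly in $\|x - y'\|$.

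The core of the argument is a two-part estimate using the triangle-type inequality for $\delta$ (or rather for the associated gap, combined with a standard perturbation lemma: if $V$ is close to $V'$ and $W$ is close to $W'$ then $\delta(V \cap H, W \cap H)$ is controlled when the relevant maps defining $H$ are themselves close and of maximal rank). Concretely: (i) the Verdier $(w)$ regularity of $(X_\alpha, X_\beta)$ gives $\delta(T_{y'}X_\beta, T_x X_\alpha) \le C_1 \|x - y'\|$ on a neighborhood $U$ of $y$; (ii) since $g = F|_X$ with $F$ smooth (one may instead just use that $g$ is the restriction of a smooth, hence locally Lipschitz-gradient, semialgebraic function, or extend $D g$ appropriately), the differentials $D(g_{|_{X_\alpha}})(x)$ and $D(g_{|_{X_\beta}})(y')$ — viewed through the ambient gradient of an extension of $g$ to $\bR^n$, projected respectively onto $T_x X_\alpha$ and $T_{y'} X_\beta$ — differ by $O(\|x - y'\|)$, because the gradient of the extension is $C^1$ (Lipschitz on compacts) and because the projections onto the tangent spaces differ by $O(\|x-y'\|)$ again by step (i). Combining (i) and (ii) via the perturbation lemma for intersections of a subspace with the kernel of a linear form of bounded-away-from-zero norm (here submersivity of $g_{|_{X_\beta}}$ guarantees $\|D(g_{|_{X_\beta}})(y')\|$ is bounded below near $y$, possibly after shrinking $U$) yields $\delta(T_{x,f}, T_{y',f}) \le C\|x - y'\|$ with a new constant $C$. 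Since this holds at every $y \in X_\beta$ after shrinking the neighborhood, $(X_\alpha, X_\beta)$ is $(w_f)$-regular.

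The main obstacle I anticipate is step (ii) together with the perturbation lemma: one needs to be careful that $T_{x,f}$ is defined intrinsically as $\ker D(g_{|_{X_\alpha}})(x)$ rather than as $\ker(d_x F) \cap T_x X_\alpha$ for a chosen smooth extension $F$ — these agree, but the Lipschitz control must be transported correctly, and the constant-rank hypothesis on $X_\alpha$ is exactly what is needed so that $\ker D(g_{|_{X_\alpha}})$ varies with the right regularity and dimension does not jump. The quantitative stability of "kernel of a linear functional of norm bounded below, restricted to a subspace that is moving Lipschitz-ly" is elementary linear algebra but must be stated with explicit control; this is presumably where \cite[Section 8]{PP} and \cite[Section 1]{HMS} do the real work, and I would cite those for the precise perturbation estimate rather than reprove it. I would also remark that the hypothesis that $g_{|_{X_\gamma}}$ has constant rank on \emph{every} stratum (not just $X_\alpha$ and $X_\beta$) is a harmless normalization that can always be arranged by refining the stratification, and is what makes $T_{x,f}$ well-defined throughout.
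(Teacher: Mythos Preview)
The paper does not actually prove this proposition: it states the result, cites \cite[Section~8]{PP} and \cite[Section~1]{HMS}, and immediately closes with the \texttt{\textbackslash fin} symbol. There is therefore no in-paper argument to compare your proposal against.

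That said, your sketch is along the standard lines of those references: combine the Verdier $(w)$ estimate $\delta(T_{y'}X_\beta, T_xX_\alpha)\le C_1\|x-y'\|$ with Lipschitz control on the (restricted) differential of $g$, then invoke a linear-algebra perturbation lemma asserting that the kernel of a surjective linear form, restricted to a subspace, varies Lipschitz-continuously when both the form and the subspace do. The constant-rank hypothesis on $X_\alpha$ ensures $T_{x,f}$ has fixed dimension, and the submersivity of $g_{|_{X_\beta}}$ provides the lower bound on $\|D(g_{|_{X_\beta}})(y')\|$ needed for the perturbation lemma. This is exactly the mechanism in \cite{HMS} and \cite{PP}, so your proposal is correct in outline and you have correctly identified where the technical work lies; it simply goes beyond what the paper itself records.
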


\subsection{Rugose vector fields}

\begin{definition}\label{defrugos}
Let $X\subset\bR^n$ be an affine variety and let $\mathcal{W}=(X_{\alpha})_{\alpha\in J}$ be an  affine  stratification of $X$. A function $\varphi:X\rightarrow\bR$ is called a rugose function if the following conditions are satisfied
\begin{enumerate}
  \item The restriction $\varphi_{|_{X_\alpha}}$ to any stratum $X_{\alpha}$ is a smooth function.
    \item For any stratum $X_\alpha$, and for any $x\in X_\alpha$, there is a neighborhood $U$ of $x$ in $\bR^{n}$ and a constant $C>0$ such that 
    $$|\varphi(y)-\varphi(x')|\leq C\|y-x'\|,$$ 
  for any $y\in X\cap U$ and any $x'\in X_\alpha\cap U$. 
\end{enumerate} 

A map $\varphi:X\rightarrow \bR^m$ is called a rugose map if its components functions are rugose functions. A vector field $v$ on $X$ is called a rugose vector field if $v$ is a rugose map and,  for any $x\in X$, the vector $v(x)$ is tangent to the stratum containing $x$.     
\end{definition}

We have the following result, see Proposition 4.3 of \cite{Ve}:  

\begin{proposition}\label{prop:ext}
Let $X\subset\bR^n$ be a semialgebraic set and  let $\mathcal{W}=(X_\alpha)_{\alpha\in J}$ be an affine Verdier $(w)$ stratification of $X$. Let $X_\alpha\in \mathcal{W}$ and let $B\subset \overline{X}_\alpha$ a closed union of strata.  Let $\eta:B\rightarrow \bR^n$ be a rugose map on $B$. Then there exists a rugose  map  $\bar{\eta}: (B\cup X_\alpha)\rightarrow \bR^n$ on  $(B\cup X_\alpha)$ that extends $\eta$, that is $\bar{\eta}|_{B}=\eta$.
\fin
\end{proposition}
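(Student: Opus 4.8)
The plan is to split the construction into a global reduction and a local model. First, using a partition of unity, I would reduce the statement to the problem of extending $\eta$ across $X_\alpha$ in a neighbourhood of each point of $B$; then I would build the local extension by orthogonally projecting onto the tangent planes of $X_\alpha$ a smooth map that serves as a model for $\eta$ near $B$, the Verdier $(w)$ condition being precisely what makes the projected field rugose.

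For the reduction, observe that being a rugose vector field on $B\cup X_\alpha$ is a local condition, that it is preserved under multiplication by a smooth function on $\bR^n$ and under locally finite sums (tangency being a linear condition), and that for a partition of unity $\{\rho_i\}$ one has $\sum_i\rho_i\eta=\eta$ on $B$. Hence, fixing a locally finite open cover $\{U_i\}$ of $\bR^n$ with a subordinate smooth partition of unity $\{\rho_i\}$, it is enough to produce for each $i$ a rugose extension $\bar\eta_i$ of $\eta|_{B\cap U_i}$ to $(B\cup X_\alpha)\cap U_i$ and then set $\bar\eta:=\sum_i\rho_i\bar\eta_i$; on a $U_i$ disjoint from $B$ one takes $\bar\eta_i\equiv 0$, and we may assume $X_\alpha\not\subset B$, so that $X_\alpha\cap U_i\subset\bR^n\setminus B$. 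After shrinking the $U_i$, we are reduced to extending $\eta$ to $X_\alpha\cap U$ for a small ball $U$ around a point $y\in X_\beta\subset B$ such that the strata of $B$ meeting $U$ are $X_\beta$ and strata $X_\gamma$ with $X_\beta\subset\overline{X}_\gamma\subset\overline{X}_\alpha$ (finitely many, by local finiteness and the frontier condition).

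For the local model, consider first the case in which $X_\alpha$ lies, near $y$, in the closure of the single stratum $X_\beta$. Then $\eta|_{X_\beta}$ is smooth on the submanifold $X_\beta$, hence extends to a smooth map $\tilde\eta$ on a neighbourhood of $y$ in $\bR^n$; set $\bar\eta:=\eta$ on $B\cap U$ and $\bar\eta(x):=\pi_{T_xX_\alpha}(\tilde\eta(x))$ for $x\in X_\alpha\cap U$, where $\pi_{T_xX_\alpha}$ is the orthogonal projection onto $T_xX_\alpha$. This field is tangent to the strata and smooth on each of them, and for $x\in X_\alpha\cap U$ and $x'\in X_\beta\cap U$ one bounds, using $\eta(x')\in T_{x'}X_\beta$,
\[
\|\bar\eta(x)-\eta(x')\|\le\|\tilde\eta(x)-\tilde\eta(x')\|+\|\pi_{T_xX_\alpha}(\eta(x'))-\eta(x')\|\le {\rm Lip}(\tilde\eta)\,\|x-x'\|+\delta(T_{x'}X_\beta,T_xX_\alpha)\,\|\eta(x')\|,
\]
and the last term is $\le C\|x-x'\|\,\|\eta(x')\|\le C'\|x-x'\|$ by the Verdier $(w)$ regularity of the pair $(X_\alpha,X_\beta)$ (Definition \ref{def verdierw}) and the local boundedness of $\eta$; together with the rugosity of $\eta$ on $B$ and the smoothness of $\bar\eta$ on $X_\alpha$, this shows that $\bar\eta$ is rugose on $(B\cup X_\alpha)\cap U$. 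In general there are finitely many strata $X_\beta=X_{\gamma_0},X_{\gamma_1},\dots,X_{\gamma_r}$ of $B$ meeting $U$, each satisfying $X_\beta\subset\overline{X}_{\gamma_j}\subset\overline{X}_\alpha$, and the model $\tilde\eta$ must be compatible with $\eta$ along every $X_{\gamma_j}$; one arranges this by an induction on dimension that produces, in the style of Thom and Mather, compatible tubular neighbourhoods of the strata together with their projections, chosen so that these projections are rugose. Reassembling with the partition of unity and rechecking the (linear) estimates then completes the argument.

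I expect the main obstacle to be this last point: the coherent construction, over a chain of intermediate strata, of a rugose model for $\eta$ near $B$ --- equivalently, of rugose control data compatible along the chain --- carried out by induction on dimension. The elementary single-stratum case isolates where the hypothesis is used: the Verdier $(w)$ inequality supplies the linear bound $\delta(T_{x'}X_\beta,T_xX_\alpha)\le C\|x-x'\|$ that the rugosity estimate for the projected field consumes, whereas Whitney $(b)$ regularity alone would only give $\delta\to 0$ without a rate and would not suffice.
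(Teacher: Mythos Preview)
The paper does not supply its own proof of this proposition; it is quoted from Verdier~\cite[Proposition~4.3]{Ve} and closed with the $\square$ symbol immediately after the statement. So there is no in-paper argument to compare your sketch against. Your outline is, in broad strokes, the route Verdier himself takes: localise with a partition of unity, build a smooth ambient model near each boundary point, and ascend through intermediate strata by induction on dimension.

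One point should be adjusted, however. The proposition is about rugose \emph{maps}: no tangency condition is imposed on $\eta$ or on the extension $\bar\eta$. Your local model --- defining $\bar\eta(x):=\pi_{T_xX_\alpha}(\tilde\eta(x))$, using $\eta(x')\in T_{x'}X_\beta$, and then consuming the $(w)$ bound on $\delta(T_{x'}X_\beta,T_xX_\alpha)$ --- is the argument for the \emph{vector-field} extension, which the paper records separately, immediately after the proposition, as Verdier's Corollary~4.4. For maps the projection onto $T_xX_\alpha$ is superfluous: in the single-stratum model one may simply set $\bar\eta:=\tilde\eta|_{X_\alpha}$, and the estimate $\|\bar\eta(x)-\eta(x')\|=\|\tilde\eta(x)-\tilde\eta(x')\|\le{\rm Lip}(\tilde\eta)\,\|x-x'\|$ follows from the smoothness of $\tilde\eta$ alone, without invoking $(w)$ at that step. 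Your remarks about ``tangency being a linear condition'' and about preserving tangency under the partition-of-unity gluing are likewise irrelevant here. The genuine work --- as you correctly anticipate --- lies in the inductive handling of a chain of intermediate strata, and it is there that the Verdier hypothesis is actually needed.
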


Under the above conditions, Verdier \cite[Corollary 4.4]{Ve} proved that if $\eta$ is a  rugose vector field on  $B$, then it  extends to a rugose vector field on  $A$. 

In the next theorem, we consider $\rho:\bR^n\to \bR$ defined by $\rho(x_1, \ldots, x_n)=x_1^2 +\ldots +x_n^2$. We prove that if $\eta$ is a rugose vector field on $B$ and is tangent to the $(n-1)$-dimensional sphere of radius $\|x\|$ for any $x\in B$, then $\eta$ extends to a rugose vector field on  $A$ and to the $(n-1)$-dimensional sphere of radius $\|y\|$ for any $y\in A$. 

\begin{theorem}\label{VTB}
Let $X\subset\bR^n$ be a closed semialgebraic set and let  $\mathcal{W}=(X_\alpha)_{\alpha\in J}$ be an affine Verdier $(w)$ stratification of $X$. Let $ {X}_{\alpha}\in \mathcal{W}$ and let $B \subset \overline{X}_{\alpha}$ be a closed union of strata. Suppose that the restriction of $\rho$  to any stratum of $B$ is a submersion.  Let $\eta:B\rightarrow \bR^n$ be a rugose vector field on  $B$ tangent to the $(n-1)$-dimensional sphere of radius $\|x\|$ for any $x\in B$. Then $\eta$ extends to a rugose vector field on $A:=B\cup X_{\alpha}$ tangent to  the $(n-1)$-dimensional sphere of radius $\|y\|$ for any  $y\in A$.
\end{theorem}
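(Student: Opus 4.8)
The plan is to reduce the statement to the ordinary rugose extension result, Proposition~\ref{prop:ext}, applied not to $X$ itself but to the level-set geometry cut out by $\rho$. The key observation is that a vector field $\eta$ on a stratum $X_\beta$ being tangent to the sphere of radius $\|x\|$ at each $x$ is equivalent to $\eta(x) \in \ker D(\rho_{|_{X_\beta}})(x) = T_{x,\rho}$ for all $x$, using that $\rho_{|_{X_\beta}}$ is a submersion (so $T_{x,\rho}$ has codimension one in $T_xX_\beta$, and the sphere-tangency condition $\langle x, \eta(x)\rangle = 0$ is exactly the equation $D\rho(x)\cdot \eta(x)=0$ restricted to the stratum). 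Thus the hypothesis on $\eta$ is precisely that it is a section of the subbundle $T_{\bullet,\rho}$ of $TB$, and the conclusion we want is that $\bar\eta$ is a section of $T_{\bullet,\rho}$ over $A$.

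First I would verify that the affine Verdier $(w)$ stratification $\mathcal{W}$ of $X$, together with the semialgebraic function $g:=\rho_{|_X}$, satisfies the hypotheses of Proposition~\ref{teowf}: we may refine $\mathcal{W}$ (if necessary) so that $\rho$ has constant rank on every stratum, and the submersivity assumption is given on the strata of $B$; since $X_\alpha$ is a single stratum we arrange (again by a refinement of $\mathcal{W}$ compatible with $B$, replacing $X_\alpha$ by the piece of it we need) that $\rho_{|_{X_\alpha}}$ is a submersion as well — note that $B$ being a closed union of strata and $X_\alpha$ a stratum with $B\subset\overline{X_\alpha}$, for $y\in B$ any tangent limit of $T_{x}X_\alpha$ contains $T_yX_\beta$, hence contains a vector with nonzero $\rho$-derivative whenever $\rho_{|_{X_\beta}}$ is a submersion, forcing $\rho_{|_{X_\alpha}}$ to be a submersion near $B$ too. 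Then Proposition~\ref{teowf} gives that for each pair of adjacent strata $(X_\alpha,X_\beta)$ with $X_\beta\subset B$, the pair is $(w_\rho)$ regular: there is $C>0$ with $\delta(T_{x,\rho},T_{y',\rho})\le C\|x-y'\|$ for $x\in X_\alpha$, $y'\in X_\beta$ near $y$.

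Second, I would run the extension argument of Verdier's Proposition~\ref{prop:ext} (i.e. Proposition~4.3 of \cite{Ve}), but carrying along the $\rho$-tangency constraint. Concretely: cover $X_\alpha$ by charts; on each chart, after the standard partition-of-unity gluing that produces a rugose extension $\bar\eta$ of $\eta$ to $B\cup X_\alpha$, post-compose the extended vector at each $x\in X_\alpha$ with the orthogonal projection $\pi_{T_{x,\rho}}:\bR^n\to T_{x,\rho}$ onto the (now well-defined, smooth in $x$) subspace $T_{x,\rho}$. Call the result $\tilde\eta$. On $B$ we have $\bar\eta(x)=\eta(x)\in T_{x,\rho}$ already, so the projection changes nothing there, and $\tilde\eta$ still extends $\eta$. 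By construction $\tilde\eta(x)\in T_{x,\rho}\subset T_xX_\alpha$ for $x\in X_\alpha$, so $\tilde\eta$ is a genuine stratified vector field tangent to the spheres. It remains to check $\tilde\eta$ is rugose, i.e. the local Lipschitz estimate $|\tilde\eta(x)-\eta(y')|\le C'\|x-y'\|$ for $x\in X_\alpha$, $y'\in X_\beta\subset B$ near $y$. Write
\[
\tilde\eta(x)-\eta(y') = \bigl(\pi_{T_{x,\rho}}\bar\eta(x) - \pi_{T_{x,\rho}}\eta(y')\bigr) + \bigl(\pi_{T_{x,\rho}}\eta(y') - \pi_{T_{y',\rho}}\eta(y')\bigr),
\]
using $\eta(y')=\pi_{T_{y',\rho}}\eta(y')$. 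The first bracket is bounded by $\|\bar\eta(x)-\eta(y')\|$, which is $O(\|x-y'\|)$ because $\bar\eta$ is rugose. The second bracket is bounded by $\|\pi_{T_{x,\rho}}-\pi_{T_{y',\rho}}\|\cdot\|\eta(y')\|$, and $\|\pi_{T_{x,\rho}}-\pi_{T_{y',\rho}}\|$ is comparable to $\delta(T_{x,\rho},T_{y',\rho})+\delta(T_{y',\rho},T_{x,\rho})$, which is $O(\|x-y'\|)$ by the $(w_\rho)$ regularity established above (together with local boundedness of $\eta$). Summing over the finitely many strata of $B$ adjacent at $y$ and patching with a partition of unity gives the rugosity of $\tilde\eta$.

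The main obstacle I anticipate is the bookkeeping in this last estimate — specifically, making sure the passage from the Verdier condition $\delta(T_{x,\rho},T_{y',\rho})\le C\|x-y'\|$ (which is one-sided, between a point of $X_\alpha$ and a point of $X_\beta$) to a genuine two-sided bound on the projection operator norm $\|\pi_{T_{x,\rho}}-\pi_{T_{y',\rho}}\|$ is legitimate; this requires the standard fact that for subspaces of equal dimension the two one-sided gaps are comparable, and here $\dim T_{x,\rho}=\dim T_{y',\rho}$ holds for $x$ near $y'$ since $\rho$ is a submersion of locally constant fiber dimension on both strata after our refinement. A secondary point to handle carefully is the compatibility of the refinement of $\mathcal{W}$ with the given closed union of strata $B$ and with the frontier condition, so that $A=B\cup X_\alpha$ is still a closed union of strata in the refined stratification; this is routine for semialgebraic Verdier stratifications but should be stated.
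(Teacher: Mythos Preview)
Your overall strategy coincides with the paper's: extend $\eta$ to a rugose map $\bar\eta$ on $A$ via Proposition~\ref{prop:ext}, project the value $\bar\eta(y)$ onto $T_yX_\alpha\cap T_yS^{n-1}$ for $y\in X_\alpha$, and verify rugosity of the projected field using the $(w_\rho)$ condition supplied by Proposition~\ref{teowf}. The paper carries out the rugosity check by contradiction (sequences $(y_k)\subset X_\alpha$, $(x_k')\subset B$ forcing a ratio to blow up, then bounding that ratio by the $(w_\rho)$ gap), whereas you attempt a direct estimate; the two are equivalent in content.

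There is, however, a genuine gap in your direct estimate, precisely at the point you flagged as the main obstacle. Your proposed resolution is that $\dim T_{x,\rho}=\dim T_{y',\rho}$, so the two one-sided gaps are comparable and hence $\|\pi_{T_{x,\rho}}-\pi_{T_{y',\rho}}\|$ is controlled. But this dimension equality is false: for $x\in X_\alpha$ one has $\dim T_{x,\rho}=\dim X_\alpha-1$, while for $y'\in X_\beta\subset B$ one has $\dim T_{y',\rho}=\dim X_\beta-1$, and $\dim X_\beta<\dim X_\alpha$ since $X_\beta\subset\overline{X_\alpha}\setminus X_\alpha$. So the operator-norm route genuinely fails.

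The fix is simply not to pass through the operator norm. Since $\eta(y')\in T_{y',\rho}$ already, the second bracket in your decomposition equals
\[
\|\pi_{T_{x,\rho}}\eta(y')-\eta(y')\|=\dist\bigl(\eta(y'),\,T_{x,\rho}\bigr)\le \|\eta(y')\|\cdot\delta\bigl(T_{y',\rho},\,T_{x,\rho}\bigr),
\]
and this last $\delta$ (small stratum into large stratum) is exactly the one-sided gap that $(w_\rho)$ bounds by $C\|x-y'\|$. This is in fact what the paper's contradiction argument isolates in its final inequality. With this correction your direct estimate goes through, and the refinement-of-$\mathcal{W}$ manoeuvre becomes unnecessary: you only need $\rho_{|_{X_\alpha}}$ to be a submersion \emph{near $B$}, which your Whitney-limit argument already gives, and rugosity is a local condition checked only at points of $B$.
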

\begin{proof}   By  Proposition \ref{prop:ext}, there is a rugose map $ \bar{\eta} : X\rightarrow \bR^n$ that $\bar{\eta}$ is a extension of $\eta$. For any $y\in X_\alpha$, let $\hat{\eta}$ be the extension of $\eta$ obtained by projecting the vector $\bar{\eta}(y)$ on  $T_yX_\alpha\cap T_yS^{n-1}$. We will show that  $\hat{\eta}:X\rightarrow \bR^n$ is a rugose vector field. 

Since $\bar{\eta}$ is a rugose vector field, it follows that  if $x\in B$ then there exist  $C>0$ and a neighborhood $G_{x}$ of $x$ such that for any $x'\in B\cap G_{x}$ and  any $y\in A\cap G_{x}$, we have:  
\begin{equation}\label{eq:r1}
    \|\bar{\eta}(y)-\bar{\eta}(x')\|\leq C\|y - x'\|.
\end{equation}

Suppose that ${\hat{\eta}}$ is not a rugose vector field on $x$. Then  there exist sequences $(y_k)_{k\in \bN}\subset X_{\alpha}\cap G_{x}$ and $(x_k')_{k\in \bN}\subset B\cap G_{x}$ such that, $y_k\rightarrow x, \text{ }x_k'\rightarrow x$ and $$\|{\hat{\eta}}(y_k)-{\hat{\eta}}(x_k')\|\geq (k+C)\|x_k'-y_k\|.$$
Thus, 
\begin{equation}\label{eq:r2}
\frac{\|\hat{\eta}(y_k)-\bar{\eta}(y_k)\|}{\|x_k'-y_k\|}+\frac{\|\bar{\eta}(y_k)-{\hat{\eta}}(x_k')\|}{\|x_k'-y_k\|}\geq (k+C),   
\end{equation}
Since $\hat{\eta}$ is an extension of $\bar{\eta}$ and $x'_k \in B$,  it follows that $\hat{\eta}(x'_k)=\bar{\eta}(x'_k)$. Then, by \eqref{eq:r1} and \eqref{eq:r2}, we have: 
 \begin{equation}\label{eq campVchap1}
\frac{\|\hat{\eta}(y_k)-\bar{\eta}(y_k)\|}{\|x_k'-y_k\|}\geq k. 
\end{equation}

Then   $ \hat{\eta}(y_k)\neq \bar{\eta}(y_k)$ and, since  $\hat{\eta}(y_k)$ is  the projection of  $\bar{\eta}(y_k)$ on $T_yX_\alpha\cap T_yS^{n-1}$, we have $\bar{\eta}(y_k)\neq 0$. It follows that 
\begin{equation*}
\frac{\|\hat{\eta}(y_k)-\bar{\eta}(y_k)\|}{\|\bar{\eta}(y_k)\|\|x_k'-y_k\|}\geq \frac{k}{\|\bar{\eta}(y_k)\|}.   
\end{equation*}

Since $\|\hat{\eta}(y_k)-\bar{\eta}(y_k)\|=dist(\bar{\eta}(y_k),T_{y_k}X_\alpha\cap T_{y_k}S^{n-1})$ and for every linear space $T$ holds $\dist(\lambda v,T)= |\lambda| \dist(v,T)$, for any $\lambda \in \bR^*$, it follows that  

\begin{equation*}
\frac{\dist\left(\frac{\bar{\eta}(y_k)}{\|\bar{\eta}(y_k)\|},T_{y_k}X_\alpha\cap T_{y_k}S^{n-1}\right)}{\|x_k'-y_k\|}=\frac{\dist(\bar{\eta}(y_k),T_{y_k}X_\alpha\cap T_{y_k}S^{n-1})}{\|\bar{\eta}(y_k)\|\|x_k'-y_k\|}\geq \frac{k}{\|\bar{\eta}(y_k)\|}.   
\end{equation*}

Since the $(w)$ condition  is a local property and invariant under $C^2$ local diffeomorphisms, we can suppose that  $B$ is an open subset of $\bR^k\subset\bR^k\times \bR^{n-k}$. Thus, $T_{x}B=\bR^{k}$ for all $x\in B$. Then: 
\begin{equation}\label{eq45}
\frac{\delta(\bR^k\cap T_{x_k'}S^{n-1},T_{y_k}X_\alpha\cap T_{y_k}S^{n-1})}{\|z_k-y_k\|}\geq \frac{\dist\left(\frac{\bar{\eta}(y_k)}{\|\bar{\eta}(y_k)\|},T_{y_k}X_\alpha\cap T_{y_k}S^{n-1}\right)}{\|x_k'-y_k\|}\geq \frac{k}{\|\bar{\eta}(y_k)\|}\rightarrow\infty   
\end{equation}
when $k\rightarrow\infty$. Since the pair $(X_\alpha,B)$ is Verdier regular e $\rho_{|_{B}}$ is a submersion, then by Proposition \ref{teowf} we have that the pair $(X_\alpha,B)$ is $(w_\rho)$ regular, which implies that the expression on the left side of the equation \eqref{eq45} is bounded. This is a contradiction.
\end{proof}

The next result was originally proved  in \cite[Proposition 4.6]{Ve}   for real analytic sets  $X$ with a Whitney stratification. By similar argument  as in \cite{Ve}, we have: 

\begin{proposition}\label{p:lift}
Let $X\subset\bR^n$ be a semialgebraic set and $\mathcal{W}=(X_\alpha)_{\alpha\in J}$ be a Verdier $(w)$ stratification of $X$. Let $f\colon \bR^n\rightarrow\bR^m$ be  a map such that the restriction of $f$ on any stratum of $\mathcal{W}$ is a $C^{\infty}$ submersion. Then, any  rugose vector field on $\bR^m$ can be lifted to a rugose vector field on $X$.  \fin
\end{proposition}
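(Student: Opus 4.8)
The plan is to mimic the proof of Theorem \ref{VTB} almost verbatim, replacing the constraint ``tangent to $S^{n-1}$'' by the constraint ``$v$ is an $f$-lift of $w$'' and the condition $(w_\rho)$ by the condition $(w_f)$; the latter is available under our hypotheses because each restriction $f|_{X_\gamma}$ is a $C^\infty$ submersion onto $\bR^m$, hence of constant rank $m$, so Proposition \ref{teowf} applies and every pair $(X_\alpha,X_\beta)$ with $X_\beta\subset\overline{X}_\alpha$ is $(w_f)$ regular. Fix a rugose vector field $w$ on $\bR^m$ and call $v$ an \emph{$f$-lift of $w$} on $Y\subset X$ if $v$ is a rugose vector field on $Y$ (in particular tangent to the strata) with $d_yf(v(y))=w(f(y))$ for all $y\in Y$. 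Exactly as in Theorem \ref{VTB}, the heart of the matter is the one-stratum extension statement: \emph{if $\eta$ is a rugose $f$-lift of $w$ on a closed union of strata $B$ and $X_\alpha\in\mathcal{W}$ is a stratum with $\overline{X}_\alpha\setminus X_\alpha\subseteq B$, then $\eta$ extends to a rugose $f$-lift of $w$ on $A:=B\cup X_\alpha$.} To prove it, apply Proposition \ref{prop:ext} to extend $\eta$ to a rugose map $\bar\eta\colon A\to\bR^n$, and for $y\in X_\alpha$ set
\[
A_y:=\{v\in T_yX_\alpha:\ d_yf(v)=w(f(y))\},\qquad \hat\eta(y):=\text{the point of $A_y$ closest to $\bar\eta(y)$},
\]
keeping $\hat\eta:=\eta$ on $B$. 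Since $f|_{X_\alpha}$ is a submersion, $A_y$ is a nonempty affine subspace parallel to $T_{y,f}=\ker d_y(f|_{X_\alpha})$; thus $\hat\eta$ is an $f$-lift of $w$, it is $C^\infty$ on $X_\alpha$, it agrees with $\eta$ on $B$, and $\bar\eta(y)-\hat\eta(y)\perp T_{y,f}$ with $\|\bar\eta(y)-\hat\eta(y)\|=\dist(\bar\eta(y),A_y)$.

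It remains to check that $\hat\eta$ is rugose at each $x\in B$; since the rugosity condition at $x$ only constrains $\hat\eta$ along sequences approaching $x$ inside the stratum $X_\beta\ni x$ and along arbitrary sequences in $X$, and $\hat\eta$ is already under control off $X_\alpha$, the only new estimate needed is between $X_\alpha$ and $X_\beta$. Here one reproduces the argument of Theorem \ref{VTB}: if $\hat\eta$ were not rugose at $x$, one would obtain the rugosity constant $C$ of $\bar\eta$ near $x$ and sequences $y_k\in X_\alpha$, $x'_k\in X_\beta$ with $y_k,x'_k\to x$ and $\|\hat\eta(y_k)-\hat\eta(x'_k)\|\ge (k+C)\|y_k-x'_k\|$; using $\hat\eta(x'_k)=\bar\eta(x'_k)$ and the $C$-rugosity of $\bar\eta$ this forces $\dist(\bar\eta(y_k),A_{y_k})=\|\hat\eta(y_k)-\bar\eta(y_k)\|\ge k\|y_k-x'_k\|$. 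Proceeding then exactly as in \eqref{eq45} — reducing to a local model in which $X_\beta$ is an open subset of a linear subspace, subtracting a particular $f$-lift and normalising, and using that $f|_{X_\beta}$ is a submersion near $x$ — one extracts from this a lower bound for $\delta(T_{x'_k,f},T_{y_k,f})/\|x'_k-y_k\|$ tending to $\infty$, contradicting the $(w_f)$ regularity of $(X_\alpha,X_\beta)$ near $x$ exactly as \eqref{eq45} contradicts the $(w_\rho)$ regularity there. Hence $\hat\eta$ is a rugose $f$-lift of $w$ on $A$.

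Finally one assembles the global lift by induction on the skeleta $X^{(d)}:=\bigcup_{\dim X_\alpha\le d}X_\alpha$, which are closed unions of strata because for Verdier $(w)$ (and Whitney) stratifications every stratum in $\overline{X}_\alpha\setminus X_\alpha$ has strictly smaller dimension than $X_\alpha$. On the strata of minimal dimension, which are closed and pairwise disjoint, the canonical smooth $f$-lift $y\mapsto\bigl(d_yf|_{(T_{y,f})^\perp\cap T_yX_\alpha}\bigr)^{-1}\bigl(w(f(y))\bigr)$ is rugose, giving the base case; the passage from $X^{(d-1)}$ to $X^{(d)}$ amounts to applying the one-stratum extension step to each $d$-dimensional stratum separately (distinct $d$-dimensional strata have disjoint closures modulo lower-dimensional strata, so there is no interaction between them) and gluing the results. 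After finitely many steps this produces a rugose $f$-lift of $w$ on all of $X$.

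The step I expect to be the main obstacle is the rugosity verification of the second paragraph, that is, controlling the correction $\bar\eta(y)-\hat\eta(y)$ — the distance from the extended map $\bar\eta$ to the affine bundle $A_y$ of admissible lifts — as $y\in X_\alpha$ tends to the lower stratum $X_\beta$. The genuine new feature compared with Theorem \ref{VTB} is that the constraint $d_yf(v)=w(f(y))$ is inhomogeneous, so besides the convergence of the tangent spaces $T_yX_\alpha$ (Verdier $(w)$, equivalently Whitney $(a)$) and the Lipschitz control of the kernels $T_{y,f}$ from $(w_f)$ regularity (Proposition \ref{teowf}), one must also control the size of the minimal-norm lift $\bigl(d_yf|_{(T_{y,f})^\perp\cap T_yX_\alpha}\bigr)^{-1}\bigl(w(f(y))\bigr)$ as $y$ approaches $X_\beta$; this is again governed by the interplay of Verdier $(w)$ and $(w_f)$ regularity, along the lines of the last paragraph of the proof of Theorem \ref{VTB} and of Verdier's original argument. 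Everything else — the induction on skeleta, the bookkeeping with the frontier condition, the existence of the smooth lift on each stratum — is routine.
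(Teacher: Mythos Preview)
The paper does not actually prove this proposition: it simply records that the result is Verdier's \cite[Proposition~4.6]{Ve} and asserts that ``by similar argument'' it carries over to the semialgebraic setting, then places a $\square$. Your proposal goes further and sketches a genuine proof by transporting the machinery of Theorem~\ref{VTB} --- itself modeled on Verdier's method --- from the sphere-tangency constraint to the $f$-lift constraint. The strategy is sound and is essentially Verdier's own: extend via Proposition~\ref{prop:ext}, project onto the constraint set (here the affine fibre $A_y$), and verify rugosity by contradiction using the strict Thom condition. Your identification of the one genuinely new ingredient is correct: because the constraint $d_yf(v)=w(f(y))$ is inhomogeneous, one must control not only the kernels $T_{y,f}$ (via $(w_f)$) but also the variation of a particular lift, and this is exactly what the combination of Verdier $(w)$ and the local Lipschitz behaviour of $df$ along the strata provides. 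So your route and the paper's (deferred) route coincide at the level of ideas; you have simply unpacked what the citation to \cite{Ve} contains.

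One small caveat worth tightening: Proposition~\ref{teowf} and the definition of $(w_f)$ in this paper are stated only for scalar semialgebraic functions $g\colon X\to\bR$, whereas you invoke $(w_f)$ for the map $f\colon\bR^n\to\bR^m$. The statement you need --- that Verdier $(w)$ together with $f|_{X_\beta}$ a submersion forces a Lipschitz estimate on the kernels $T_{\cdot,f}$ --- does hold for maps and is covered by the sources \cite{HMS,PP}, but you should cite those directly rather than Proposition~\ref{teowf} as written.
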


\section{Main theorem}\label{s:3}

\subsection{Stratified Milnor set}

Let $X\subset\bR^n$ be a semialgebraic set and let $\mathcal{W}=(X_{\alpha})_{\alpha\in J}$ be an affine stratification of $X$, see Definition \ref{d:stratification}.

Let  $f\colon X\rightarrow\bR^m$ be a semialgebraic map, where $\dim X \geq m$.   Let  $\rho:\bR^n\rightarrow\bR$ be the function given by $\rho(x_1, \ldots, x_n)=x_1^2 + \ldots +x^2_n$. 

For any stratum $X_{\alpha}$ of $\mathcal{W}$, we denote by  $\Sing f_{|_{X_{\alpha}}}$  the critical locus of $f_{|_{X_{\alpha}}}$ and let $f(\Sing f_{|_{X_{\alpha}}})$ be the critical values set of $f_{|_{X_{\alpha}}}$. We denote by $ M(f_{|_{X_\alpha}})$ the critical locus of  $(f,\rho):  X_{\alpha} \to \bR^{m+1}$ and we define: 
 \begin{equation}
     \Si(f_{|_{X_\alpha}})=\{y\in\bR^m\mid\exists (x_j)_{j\in\bN}\subset M(f|_{X_\alpha}), x_j\rightarrow\infty\text{ and }\lim_{j\rightarrow \infty} f(x_j)=y\}, 
\end{equation}
where $x_j \rightarrow \infty$ means that the sequence $(x_j)_{j\in\bN}\subset M(f|_{X_\alpha})$ has no convergent subsequences in $M(f|_{X_\alpha})$.  
 
\begin{definition}\label{defconjmilnor}
 We call $M(f, \mathcal{W}):=\cup_{\alpha\in J} M(f_{|_{X\alpha}})$ the stratified  Milnor set of $f$ and we call  \begin{equation}\label{defS}
 \mathfrak{S}(f,\mathcal{W}):=\bigcup_{\alpha\in J }(f(\Sing f_{|_{X_{\alpha}}})\cup \Si(f_{|_{X_\alpha}})). 
 \end{equation}
 the stratified  $\rho$ non-regular values set of $f$. 
\end{definition} 
 
\begin{proposition}\label{p:closed}
Let $X\subset\bR^n$ be a semialgebraic set and let $\mathcal{W}=(X_\alpha)_{\alpha\in J}$ be an affine  stratification. Let $f\colon X\rightarrow\bR^m$ be a semialgebraic map.   Then, for any $\alpha\in J$, the sets  $\Si(f_{|_{X_\alpha}})$ and $(f(\Sing f_{|_{X_{\alpha}}})\cup \Si(f_{|_{X_\alpha}}))$  are closed.
\end{proposition}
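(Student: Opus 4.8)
The plan is to reduce both claims to one elementary fact about ``escape value sets'' on locally closed sets, thereby bypassing any properness assumption. First I would record the preliminary observation that every stratum $X_\alpha$ is locally closed in $\bR^n$: by the frontier condition $\overline{X}_\alpha\setminus X_\alpha$ is a locally finite union of strata and one checks it is closed, so $X_\alpha=\overline{X}_\alpha\setminus(\overline{X}_\alpha\setminus X_\alpha)$ is open in $\overline{X}_\alpha$. Since $\Sing f_{|_{X_\alpha}}$, respectively $M(f_{|_{X_\alpha}})$, is the zero locus in $X_\alpha$ of the maximal minors of the Jacobian of $f_{|_{X_\alpha}}$, respectively $(f,\rho)_{|_{X_\alpha}}$, each is closed in $X_\alpha$, hence a locally closed semialgebraic subset of $\bR^n$; and on each of them $f$ agrees with the continuous ambient map $F$.

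Next I would prove the key lemma: if $S\subset\bR^n$ is locally closed and $g\colon S\to\bR^m$ is continuous, then
\[
\mathcal{L}(S):=\{\,y\in\bR^m\mid\exists\,(x_j)_{j\in\bN}\subset S\text{ with no convergent subsequence in }S,\ g(x_j)\to y\,\}
\]
is closed. The mechanism is that a locally closed $S$ admits a compact exhaustion $K_1\subset K_2\subset\cdots$, $S=\bigcup_k K_k$, with $K_k\subset\mathrm{int}_S(K_{k+1})$; explicitly one may take $K_k=\{x\in S:\|x\|\le k,\ \dist(x,\overline S\setminus S)\ge 1/k\}$, which is closed and bounded, hence compact, and exhausts $S$ because $\overline S\setminus S$ is closed and disjoint from $S$ (if $S$ is closed, take $K_k=S\cap\{\|x\|\le k\}$). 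A sequence in $S$ has no convergent subsequence in $S$ exactly when it eventually leaves every compact subset of $S$, equivalently when it eventually leaves every $K_k$. From this I would verify the two inclusions giving
\[
\mathcal{L}(S)=\bigcap_{k\ge 1}\overline{g(S\setminus K_k)}:
\]
``$\subseteq$'' holds because, for each fixed $k$, a witnessing sequence lies in $S\setminus K_k$ for all large $j$; ``$\supseteq$'' holds because, picking $x^{(k)}\in S\setminus K_k$ with $\|g(x^{(k)})-y\|<1/k$, the sequence $(x^{(k)})_k$ leaves each $K_{k_0}$ for $k\ge k_0$, hence has no convergent subsequence in $S$. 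As an intersection of closed sets, $\mathcal{L}(S)$ is closed. Applying this with $S=M(f_{|_{X_\alpha}})$ gives $\Si(f_{|_{X_\alpha}})=\mathcal{L}(M(f_{|_{X_\alpha}}))$, closed, which is the first assertion.

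For the second assertion, set $\Sigma:=\Sing f_{|_{X_\alpha}}$ and $M:=M(f_{|_{X_\alpha}})$. I would first note $\mathcal{L}(\Sigma)\subset\Si(f_{|_{X_\alpha}})$: if a sequence in $\Sigma$ had a subsequence converging to some $z\in M$, then since $M\subset X_\alpha$ and $\Sigma$ is closed in $X_\alpha$ we would get $z\in\overline{\Sigma}\cap X_\alpha=\Sigma$, contradicting that the sequence has no convergent subsequence in $\Sigma$; hence it has none in $M$, so $\mathcal{L}(\Sigma)\subset\mathcal{L}(M)=\Si(f_{|_{X_\alpha}})$. I would then show $\overline{f(\Sing f_{|_{X_\alpha}})}=\overline{F(\Sigma)}\subset f(\Sing f_{|_{X_\alpha}})\cup\mathcal{L}(\Sigma)$: for $\xi_l\in\Sigma$ with $F(\xi_l)\to y_0$, either some subsequence converges to a point $\xi_0\in\Sigma$, whence $y_0=F(\xi_0)\in f(\Sigma)$, or $(\xi_l)$ has no convergent subsequence in $\Sigma$, whence $y_0\in\mathcal{L}(\Sigma)$. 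Combining these with $\overline{\Si(f_{|_{X_\alpha}})}=\Si(f_{|_{X_\alpha}})$ from the previous paragraph yields
\[
\overline{f(\Sing f_{|_{X_\alpha}})\cup\Si(f_{|_{X_\alpha}})}\subseteq f(\Sing f_{|_{X_\alpha}})\cup\Si(f_{|_{X_\alpha}}),
\]
so this set is closed.

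The one genuinely delicate point is the uniform handling of the two ways a sequence in $M(f_{|_{X_\alpha}})$ can fail to have a limit there: escaping to infinity, or accumulating on the frontier $\overline{M(f_{|_{X_\alpha}})}\setminus M(f_{|_{X_\alpha}})$. The naive approach, writing $\Si(f_{|_{X_\alpha}})$ as $F$ of that (closed) frontier together with the set of values at infinity, breaks down because $F$ need not be proper on the frontier; replacing it by the decreasing intersection $\bigcap_k\overline{g(S\setminus K_k)}$ removes the difficulty and is the crux of the argument. Semialgebraicity enters only to guarantee that all the sets involved are semialgebraic; for the closedness statement it is not used.
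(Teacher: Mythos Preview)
Your proof is correct and takes a genuinely different route from the paper's. The paper argues directly: it fixes $y$ in the closure of $\Si(f_{|_{X_\alpha}})$, picks approximating $y^k$ together with witness sequences $(x_j^k)_j\subset M(f_{|_{X_\alpha}})$, and then performs a diagonal extraction after splitting into the cases $\|x_j^k\|\to\infty$ versus $x_j^k\to x^k\in\overline{X}_\alpha\setminus X_\alpha$ (with a further subcase analysis on the behaviour of $(x^k)_k$); the second assertion is handled by an analogous dichotomy on a sequence in $\Sing f_{|_{X_\alpha}}$. Your approach replaces all of this case analysis by the single identity $\mathcal{L}(S)=\bigcap_k\overline{g(S\setminus K_k)}$ for a compact exhaustion of a locally closed $S$, which immediately exhibits $\Si(f_{|_{X_\alpha}})$ as an intersection of closed sets and then feeds into the short inclusion chain $\overline{f(\Sigma)}\subset f(\Sigma)\cup\mathcal{L}(\Sigma)\subset f(\Sigma)\cup\Si(f_{|_{X_\alpha}})$. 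The gain is twofold: you avoid the somewhat delicate diagonalizations entirely, and you make transparent that the argument is purely topological (local closedness of $M(f_{|_{X_\alpha}})$ and continuity of $f$ suffice), with semialgebraicity irrelevant for closedness. The paper's approach, by contrast, is more hands-on and keeps everything at the level of sequences, which has the minor advantage of not invoking an auxiliary exhaustion lemma; but your formulation is cleaner and more reusable. One small remark: your appeal to an ambient map $F$ is not needed here and is not part of the hypotheses of the proposition---continuity of $f_{|_{X_\alpha}}$ (implicit in being able to speak of $\Sing f_{|_{X_\alpha}}$) is all your lemma requires.
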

\begin{proof}
Let $y\in \overline{\Si(f_{|_{X_\alpha}})}$ and let $(y^{k})_{k\in \bN}\subset \Si(f_{|_{X_\alpha}})$ be a sequence such that $\lim_{k\to \infty} y^k= y$. By definition of $\Si(f_{|_{X_\alpha}})$, for every $k\in \bN$, there exists a sequence $(x^{k}_{j})_{j\in\bN}\subset M(f|_{X_{\alpha}})$, such that 
$x^{k}_{j}\rightarrow\infty$ and ${\lim_{j\rightarrow \infty}}f(x^{k}_{j})=y^k$. Up to a  subsequence of $(y^{k})_{k\in \bN}$, we may suppose the following cases: 

\vspace{.2cm}

\noindent Case 1: $\lim_{j\to\infty}\|x^{k}_{j}\|=\infty$,    $k \in \bN$.

In this case, for each $k \in \bN$, there is $j(k)\in \mathbb{N}$ such that if $j\geq j(k)$, then $\|x^{k}_{j}\|>k$ and  $\|f(x^{k}_{j})-y^{k}\|<1/k$. Setting  $z^k=x^{k}_{j(k)}$, we have a sequence $\{z^k\}_{k\in\bN}\subset M(f|_{X_\alpha})$, such that ${\lim_{k\rightarrow \infty}}\|z^{k}\|=\infty$ and ${\lim_{k\rightarrow \infty}}f(z^k)=y$. Therefore, in this case, we have $y\in \Si(f_{|_{X_\alpha}})$.
 
\vspace{.2cm}

\noindent Case 2: ${\lim_{j\rightarrow \infty}}x^{k}_{j}= x^k\in \overline{X}_{\alpha}\setminus X_{\alpha}$. 

Thus, we have $f(x^k)=y^k$ since $f$ is a continuous map. For this case, up to a  subsequence of $(x^{k})_{k\in \bN}$, we may suppose the following subcases:

\vspace{.2cm}

i-) ${\lim_{k\rightarrow \infty}}\|x^{k}\|=\infty$. Then, for each $k \in \bN$, there is $j(k)\in \mathbb{N}$ such that if $j\geq j(k)$ then $\|x^{k}_{j}\|>k$, $\|x^{k}_{j}-x^{k}\|<1/k$ and $\|f(x^{k}_{j})-y^{k}\|<1/k$. Setting  $z^k=x^{k}_{j(k)}$, we have a sequence $\{z^k\}_{k\in\bN}\subset M(f|_{X_\alpha})$ such that ${\lim_{k\rightarrow \infty}}\|z^{k}\|=\infty$ and ${\lim_{k\rightarrow \infty}}f(z^k)=y$. Therefore,  $y\in \Si(f_{|_{X_\alpha}})$.

ii-)  ${\lim_{k\rightarrow \infty}}x^{k}= x\in \overline{X}_{\alpha}\setminus X_{\alpha}$. Then, for each $k \in \bN$, there is $j(k)\in \mathbb{N}$ such that if $j\geq j(k)$ then $\|x^{k}_{j}-x^{k}\|<1/k$ and $\|f(x^{k}_{j})-y^{k}\|<1/k$. Setting  $z^k=x^{k}_{j(k)}$, we have a sequence $\{z^k\}_{k\in\bN}\subset M(f|_{X_\alpha})$, such that ${\lim_{k\rightarrow \infty}}z^{k}=x$ and ${\lim_{k\rightarrow \infty}}f(z^k)=y$. Hence, $y\in \Si(f_{|_{X_\alpha}})$. 

This proves that $\Si(f_{|_{X_\alpha}})$ is a closed set. 

Now, let $t_0\in \overline{(f(\Sing f_{|_{X_{\alpha}}})\cup \Si(f_{|_{X_\alpha}}))}$. We may assume that $t_0\in \overline{f(\Sing f_{|_{X_{\alpha}}})}$ since $\Si(f_{|_{X_\alpha}})$ is a closed set. Then, there  exists a sequence $(x_j)_{j\in\bN}\subset \Sing f_{|_{X_{\alpha}}}$ such that ${\lim_{j\rightarrow \infty}}f(x_j)=t_0$. We have the following cases: 

 \noindent Case 1:   $x_j\rightarrow\infty$. Then, $t_0\in \Si(f_{|_{X_\alpha}})$ since $\Sing f_{|_{X_{\alpha}}} \subset M(f_{|_{X_{\alpha}}})$.

\noindent Case 2: ${\lim_{j\rightarrow \infty}}x_j=x_0 \in X_{\alpha}$. It follows that  $x_0\in \Sing f_{|_{X_{\alpha}}}$ since $\Sing f_{|_{X_{\alpha}}}$ is a closed set in $X_{\alpha}$. Then,   $f(x_0)=t_0\in f(\Sing f_{|_{X_{\alpha}}})$. 

Therefore, $t_0\in (f(\Sing f_{|_{X_{\alpha}}})\cup \Si(f_{|_{X_\alpha}}))$. This proves that $(f(\Sing f_{|_{X_{\alpha}}})\cup \Si(f_{|_{X_\alpha}}))$ is a closed set.
\end{proof}

\subsection{Main theorem}

In the following we denote by $B_{R}$ the  open ball centered at $0\in \bR^n$ with radius $R$. 

In this section we prove our main results: 

\begin{theorem}\label{t:main} Let $X\subset\bR^n$ be a semialgebraic set and let $\mathcal{W}=(X_{\alpha})_{\alpha\in J}$ be an affine Verdier $(w)$ stratification of $X$. Let  $f=(f_1,\dots, f_m)\colon X\rightarrow \mathbb{R}^m$ be the restriction of a smooth semialgebraic map $F:\bR^n \to \bR^m$ to $X$.  Then    $B(f) \subset \mathfrak{S}(f,\mathcal{W})$.  
\end{theorem}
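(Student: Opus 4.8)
The plan is to show that if $t_0 \notin \mathfrak{S}(f,\mathcal{W})$, then $f$ is locally trivial at $t_0$; this gives $B(f) \subset \mathfrak{S}(f,\mathcal{W})$. So fix $t_0 \in \bR^m \setminus \mathfrak{S}(f,\mathcal{W})$. Since $\mathfrak{S}(f,\mathcal{W})$ is closed (Proposition \ref{p:closed} applied to each of the finitely many strata meeting a neighborhood of $f^{-1}(t_0)$, using local finiteness), there is a ball $D \subset \bR^m$ centered at $t_0$ with $\overline{D} \cap \mathfrak{S}(f,\mathcal{W}) = \emptyset$. The goal is to build, over $D$, $m$ rugose vector fields $v_1,\dots,v_m$ on $f^{-1}(\overline{D})$ lifting the coordinate vector fields $\partial/\partial t_1,\dots,\partial/\partial t_m$ on $D$, whose flows are complete and commute, and whose time-flow composition furnishes the trivializing diffeomorphism $h\colon f^{-1}(t_0) \times D \to f^{-1}(D)$ with $f \circ h = pr_2$.

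The construction splits at a sphere $S_R$ of large radius $R$, exactly as announced in the introduction. \emph{Inside the ball.} Since $\overline{D}$ avoids $\bigcup_\alpha f(\Sing f_{|_{X_\alpha}})$, the restriction of $f$ to every stratum meeting $f^{-1}(\overline D) \cap \overline{B_R}$ is a submersion at those points; by Proposition \ref{p:lift} the coordinate fields on $D$ lift to rugose vector fields on $f^{-1}(\overline D) \cap \overline{B_R}$ (shrinking $D$ and working stratum by stratum from lower to higher dimension, using the Verdier extension Proposition \ref{prop:ext} to glue). \emph{Outside the ball.} Since $\overline D$ also avoids $\bigcup_\alpha \Si(f_{|_{X_\alpha}})$, for $R$ large enough the map $(f,\rho)$ restricted to each stratum is a submersion on $f^{-1}(\overline D) \cap \{\rho \geq R^2\}$ — i.e.\ the stratified Milnor set $M(f,\mathcal{W})$ is disjoint from that region. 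Hence on each such stratum we can lift the coordinate fields on $D$ to vector fields that are in addition tangent to the fibers of $\rho$, and Theorem \ref{VTB} (applied inductively over strata by dimension) lets us extend these through the singular strata as rugose vector fields still tangent to the spheres $\rho = \mathrm{const}$. Finally glue the $m$ inside fields with the $m$ outside fields along a collar of $S_R$ using a rugose partition of unity subordinate to $\{\rho < R^2 + \varepsilon\}$ and $\{\rho > R^2 - \varepsilon\}$: since the outside fields are tangent to level sets of $\rho$, the glued fields still project correctly under $f$ (they remain lifts of the coordinate fields, as this property is preserved by convex combination) and are rugose and tangent to strata.

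It then remains to check completeness and commutativity of the resulting flows and to assemble the trivialization. Tangency to the spheres $\rho = \mathrm{const}$ on the unbounded part confines trajectories to compact sets $f^{-1}(\overline D) \cap \{\rho \leq \rho(x_0)\}$ when the trajectory starts outside $B_R$, while trajectories starting inside are trapped in the compact $f^{-1}(\overline D) \cap \overline{B_{R'}}$ for suitable $R'$; hence all flows are complete (here one uses that $X$ is closed, or passes to $\overline X$ with its induced stratification, so that no trajectory escapes to a boundary point of $X$). The vector fields $v_i$ lift $\partial/\partial t_i$ and the base fields commute, so the flows commute on the level of the base; a standard argument (as in the classical Thom--Mather isotopy lemma, cf.\ \cite{GWPL}, and as used in \cite{DJ, Va}) upgrades this to commutation of the lifted flows, and composing them gives the diffeomorphism $h$ with $f\circ h = pr_2$. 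Restricting to any smaller ball around $t_0$ yields local triviality at $t_0$.

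\textbf{Main obstacle.} The delicate point is the gluing along the collar of $S_R$: one must ensure the partition of unity is rugose (which follows from using semialgebraic bump functions of $\rho$, since $\rho$ is rugose and, on each stratum, smooth), and — more importantly — that the property ``tangent to the sphere'' is only imposed on the unbounded piece while ``lifts $f$'' is preserved everywhere after convex combination. Equally subtle is guaranteeing completeness of the flows for a \emph{nonproper} $f$: here the whole point of Theorem \ref{VTB} and the sphere-tangency is to prevent trajectories from running off to infinity within a fiber, and one must verify that no trajectory leaves $X$ either, which is where closedness of $X$ (and the frontier condition of the stratification) enters.
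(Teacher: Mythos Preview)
Your proposal is correct and follows essentially the same route as the paper: split at a large sphere, use Proposition~\ref{p:lift} to lift the coordinate fields on the inside, build sphere-tangent lifts on the outside via the inductive stratum-by-stratum construction with Theorem~\ref{VTB}, glue with a bump function in $\rho$, and integrate.

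Two small points where the paper is more explicit than your sketch. First, the extension produced by Theorem~\ref{VTB} is obtained by orthogonally projecting onto $T_yX_\alpha\cap T_yS^{n-1}$, and this projection does \emph{not} in general preserve the condition $d_yf(V_i)=e_i$; the paper therefore decomposes the extended vector in the frame $\{V_{1_\alpha},\dots,V_{m_\alpha}\}$ plus a piece in $\Ker d_y(f_{|_{X_\alpha}})\cap T_yS^{n-1}$, replaces the frame coefficients by the correct ones, and then checks (Lemma~\ref{l:rug-mod}) that this corrected field is still rugose --- this lemma is the real technical core of the outside construction, and your sentence ``Theorem~\ref{VTB} \dots\ lets us extend these \dots'' hides it. Second, the gluing is done with an ordinary $C^\infty$ bump function $\varphi\colon\bR^n\to[0,1]$ depending only on $\|x\|$ (so smooth on each stratum and globally Lipschitz), which makes the rugosity check for $H_i=\varphi W_i+(1-\varphi)V_i^{\dim X}$ straightforward; no semialgebraic or rugose partition of unity is needed. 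Finally, commutation of the flows is not used: since $d_xf(H_i)=e_i$ holds at every point of $f^{-1}(B)$, composing the time-$s_i$ flows in any fixed order already yields a map $h$ with $f\circ h=pr_2$.
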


First, we prove the following theorem:

 \begin{theorem}\label{t:main-ity} Let $X\subset\bR^n$ be a semialgebraic set and let $\mathcal{W}=(X_{\alpha})_{\alpha\in J}$ be an affine Verdier $(w)$ stratification of $X$. Let  $f=(f_1,\dots, f_m)\colon X\rightarrow \mathbb{R}^m$ be the restriction of a smooth semialgebraic map $F:\bR^n \to \bR^m$ to $X$.   Let $z\notin \mathfrak{S}(f,\mathcal{W})$ and $B$ an open ball such that $z\in B$ and $B\cap \mathfrak{S}(f,\mathcal{W})=\emptyset$. Then, there  is $R>0$ such that $f_{|_{X\cap B_R^c}}$ is a topologically trivial fibration over $B$. 
 \end{theorem}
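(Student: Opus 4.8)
The plan is to produce, for the given $z \notin \mathfrak{S}(f,\mathcal{W})$ and ball $B$ with $B \cap \mathfrak{S}(f,\mathcal{W}) = \emptyset$, a large radius $R$ and a collection of $m$ rugose vector fields on $X \cap B_R^c$ whose flows glue into a trivialization. First I would fix coordinates $t = (t_1,\dots,t_m)$ on $\bR^m$ and consider the $m$ coordinate vector fields $\partial/\partial t_1,\dots,\partial/\partial t_m$ on $B$. The goal is to lift each of them, over $X \cap B_R^c$, to a rugose vector field $v_i$ on $X$ that is (a) tangent to every stratum $X_\alpha$, (b) $f$-related to $\partial/\partial t_i$ (so $d f(v_i) = \partial/\partial t_i$), and (c) tangent to the spheres $S^{n-1}_{\|x\|}$ at every point. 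Property (c) is what makes the flows stay in $B_R^c$ and ultimately trivialize $f$ there; the construction mimics \cite{DJ} and the proof of Theorem~\ref{VTB}.

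The key step is the choice of $R$. Because $z \notin \mathfrak{S}(f,\mathcal{W}) = \bigcup_\alpha (f(\Sing f_{|_{X_\alpha}}) \cup \Si(f_{|_{X_\alpha}}))$ and this set is closed (Proposition~\ref{p:closed}), after shrinking $B$ we may assume $\overline{B} \cap \mathfrak{S}(f,\mathcal{W}) = \emptyset$. I claim there is $R > 0$ with the property that for every stratum $X_\alpha$ and every $x \in X_\alpha \cap f^{-1}(\overline{B})$ with $\|x\| \geq R$, the point $x$ is not in $M(f_{|_{X_\alpha}})$, i.e. $(f,\rho)_{|_{X_\alpha}}$ is a submersion at $x$; moreover $f_{|_{X_\alpha}}$ itself is a submersion at such $x$. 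If no such $R$ existed, then for some $\alpha$ one could extract a sequence $x_j \in M(f_{|_{X_\alpha}})$ with $\|x_j\| \to \infty$ and $f(x_j) \to y \in \overline{B}$; but then $y \in \Si(f_{|_{X_\alpha}}) \subset \mathfrak{S}(f,\mathcal{W})$, contradicting $\overline{B} \cap \mathfrak{S}(f,\mathcal{W}) = \emptyset$. A finiteness argument over the (locally finite, in fact finite after intersecting with the relevant compact-in-$B$ locus) set of strata upgrades this to a single $R$ working for all $\alpha$. With this $R$ fixed, set $Y := X \cap B_R^c \cap f^{-1}(B)$; on each stratum of the induced stratification of $Y$, both $f$ and $(f,\rho)$ are submersions, so in particular $\rho$ restricted to each stratum of each fiber $f^{-1}(t) \cap B_R^c$ is a submersion — exactly the hypothesis needed to invoke Theorem~\ref{VTB}.

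Next I would build the lifted fields by induction on the dimension of strata, as in the extension arguments of \cite{Ve} and Theorem~\ref{VTB}. On a single stratum $X_\alpha$, since $f_{|_{X_\alpha}}$ and $(f,\rho)_{|_{X_\alpha}}$ are submersions on $X_\alpha \cap Y$, one can choose a smooth vector field on $X_\alpha \cap Y$ that maps to $\partial/\partial t_i$ under $df$ and is tangent to the spheres (solve the linear system $d f(v) = e_i$, $d\rho(v) = 0$ on the tangent space of $X_\alpha$, which is solvable precisely because the relevant differential is onto). Then, proceeding from lower- to higher-dimensional strata and using Proposition~\ref{prop:ext} together with the sphere-tangency refinement in Theorem~\ref{VTB} (applied with $B$ the closed union of already-handled strata and $X_\alpha$ the next stratum), extend to a global rugose vector field $v_i$ on $Y$ that is $f$-related to $\partial/\partial t_i$ and tangent to the spheres; one must check that the extension can be taken compatible with the $f$-relatedness — this is standard since the affine space of local lifts is nonempty and one glues with a rugose partition of unity, the tangency-to-sphere constraint surviving the gluing by the same Verdier-$(w_\rho)$ boundedness estimate used in the proof of Theorem~\ref{VTB}. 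Finally, the $m$ commuting-enough rugose fields $v_1,\dots,v_m$ have well-defined (local, then patched) flows; because each $v_i$ is tangent to the spheres, the flows preserve $B_R^c$, and because $df(v_i) = \partial/\partial t_i$, integrating them from the fiber over $z$ produces a homeomorphism $h \colon (f^{-1}(z) \cap B_R^c) \times B \to f^{-1}(B) \cap B_R^c$ with $f \circ h = \mathrm{pr}_2$; shrinking $B$ to a box makes the flow domains uniform. This exhibits $f_{|_{X \cap B_R^c}}$ as a topologically trivial fibration over $B$.

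The main obstacle I anticipate is not the vector-field construction per se but verifying that the extension process of Proposition~\ref{prop:ext}/Theorem~\ref{VTB} can be carried out \emph{simultaneously} respecting all three constraints (rugosity, $f$-relatedness, sphere-tangency): one needs that at each inductive step the target affine subspace $\{v \in T_x X_\alpha : df(v) = e_i,\ d\rho(v) = 0\}$ is nonempty and varies nicely, which is guaranteed by the choice of $R$ making both $f$ and $(f,\rho)$ submersive on strata inside $Y$, and that the Verdier $(w_\rho)$ estimate (Proposition~\ref{teowf}, valid here since $\rho_{|_{X_\beta}}$ is a submersion on the relevant strata) controls the "error" term when projecting a rugose extension onto $T_yX_\alpha \cap T_yS^{n-1}$. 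Making the bookkeeping of these three constraints precise across the stratified induction is where the real work lies; the integration of the flows and the conclusion that $f \circ h = \mathrm{pr}_2$ are then routine, exactly as in the classical Thom–Mather argument.
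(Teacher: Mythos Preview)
Your outline matches the paper's proof: pick $R$ so that $(f,\rho)$ is stratum-wise submersive on $f^{-1}(\overline B)\setminus \overline B_R$, lift the coordinate fields $\partial/\partial t_i$ to rugose, sphere-tangent fields by induction on stratum dimension via Theorem~\ref{VTB}, then integrate. The paper also changes coordinates so that $B=(-1,1)^m$ (making the flow-domain argument clean) and records that strata of dimension $\le m$ do not meet $f^{-1}(B)\cap B_R^c$, so the induction starts at $d=m+1$.

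Where your proposal is too loose is precisely the step you flag as the main obstacle. A rugose partition of unity on a single stratum would preserve an affine constraint like $df(v)=e_i$, but that is not the difficulty: the difficulty is that Theorem~\ref{VTB} extends a rugose sphere-tangent field from the already-handled strata to the next stratum $X_\alpha$ \emph{without} any guarantee that the extension remains $f$-related. The paper resolves this by an explicit algebraic correction, not a partition of unity. On each new stratum it fixes smooth fields $V_{1_\alpha},\dots,V_{m_\alpha}$ spanning the orthogonal complement of $\ker d(f_{|_{X_\alpha}})\cap T S^{n-1}$ inside $T X_\alpha\cap T S^{n-1}$, with $df(V_{j_\alpha})=e_j$; then it writes the Theorem~\ref{VTB} extension as $\widehat V_i^{d+1}=\sum_j a_j\,V_{j_\alpha}+G_\alpha$ with $G_\alpha$ in the kernel part, and \emph{replaces} the coefficients $a_j$ by $\delta_{ij}$ to force $df=e_i$ while keeping sphere-tangency. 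The real work --- isolated as Lemma~\ref{l:rug-mod} --- is showing this replacement preserves rugosity: one uses the rugosity of $\widehat V_i^{d+1}$ together with the local Lipschitz property of $dF$ to bound $\|\sum_j b_j(x',y)e_j-e_i\|$ by $C\|x'-y\|$, and hence control $\|V_i^{d+1}(y)-V_i^{d+1}(x')\|$. Your ``project onto $\{df=e_i,\ d\rho=0\}$'' is morally this same move, but the rugosity of that projection is exactly the content you left unproved, and no partition-of-unity argument supplies it.
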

\begin{proof}
Let $z\in\bR^m\setminus \mathfrak{S}(f,\mathcal{W})$. It follows by Proposition \ref{p:closed} that there exists an open ball $B$ such that $z\in B$ and    $\overline{B} \subset \bR^m\setminus \mathfrak{S}(f,\mathcal{W})$. After a linear change of coordinates, we may assume that $z$ is the origin $0$ and $B=(-1,1)^m$, that is, $B$ is the cartesian product of the open interval $(-1,1)$ $m$-times.

It follows by Definition \ref{defconjmilnor} that there exist   $R>0$ such that 
\begin{eqnarray}\label{eq:M=empty}
 (\bR^n\setminus \overline{B}_{R} ) \cap  X_{\beta}\cap f^{-1}(B) \cap M(f, \mathcal{W})  = \emptyset,
\end{eqnarray} 
for any stratum $X_{\beta}$ of $\mathcal{W}$. 

We set $U_{\beta}:= (\bR^n\setminus \overline{B}_{R} ) \cap X_{\beta}$ and $U:=\cup_{\beta \in J} U_{\beta}$.  

Then, for any $x\in f^{-1}(B) \cap  U$, the restriction $(f,\rho) : X_{\beta} \to \bR^{m+1}$ is a submersion at $x$, where $X_{\beta}$ is the stratum of $\mathcal{W}$ containing $x$. It follows that the restriction of the linear map \[ d_x(f_{|_{X_{\beta}}}): T_x X_{\beta} \to \bR^m\]  to $T_xX_{\beta} \cap T_xS^{n-1}$ is a surjective linear map, i.e. $d_x(f_{|_{X_{\beta}}}) (T_xX_{\beta} \cap T_xS^{n-1}) = \bR^m$. Then,  there exist  $m$ smooth vector fields $\{V_{1_\beta}, \dots, V_{m_\beta}\}$ on  $X_{\beta}$ satisfying  the following two conditions:  

 \begin{enumerate}
\item $\{V_{1_{\beta}}(x), \dots, V_{m_{\beta}}(x)\}$ is a basis of 
\[ Z_\beta =\{ w \in T_xX_{\beta} \cap T_xS^{n-1} \mid \langle w,u \rangle =0, \mbox{ for all } u\in (\Ker d_{x}(f_{|_{X_\beta}}))\cap  T_xS^{n-1} \}.\]

\item $d_{x}(f_{|_{X_\beta}})(V_{i_{\beta}}(x))=e_i, \mbox{ and } \langle \nabla\rho(x), V_{i_{\beta}}(x)\rangle =0,$ for any $i=1, \dots, m$. 
\end{enumerate}   
 
\vspace{.2cm}
  
We point out that the vector fields on  $f^{-1}(B) \cap U$ coinciding with $V_{1_{\beta}}, \ldots, V_{m_{\beta}}$ on  each stratum $  X_{\beta}$  are  not necessarily rugose vector fields. In what follows, we use these vector fields to produce  $m$ rugose vector fields on $f^{-1}(B) \cap U$. 

For $0 \leq d\leq \dim X$, let $I_d:=\{\beta\in J \mid 0 \leq \dim X_\beta\leq d\}$ and $U_d:=\cup_{\beta\in I_d}U_{\beta} \cap f^{-1}(B)$. Then, 
\[ \cup_{0\leq d \leq \dim X} U_d = f^{-1}(B) \cap U.\]

We define the $m$ rugose vector fields on $f^{-1}(B) \cap U$ inductively as follows: 

\vspace{.2cm}

\noindent {\bf First step}. We have $U_d= \emptyset$, for $0 \leq d \leq m$. 

In fact, for $0 \leq d <m$ and $\alpha \in I_d$,  we have  $f(X_{\alpha}) \subset f(\Sing f_{|X_{\alpha}})$. Since $B \cap f(\Sing f_{|X_{\alpha}})=\emptyset$, it follows that $U_d=\emptyset$, for $0 \leq d <m$.  

For $d=m$ and $\beta \in I_m$, we have $X_\beta \subset M(f_{|X_\beta})$. It follows by \eqref{eq:M=empty} and the definition of $U_\beta$ that $U_m = \emptyset$. 

\vspace{.2cm}

\noindent{\bf Second step}. For $d=m+1$ and $\beta \in I_{m+1}$, we define $V_i^{m+1}(x)=V_{i_\beta}(x)$, for any $x\in X_\beta$ and $i=1, \ldots, m$. Since each $V_{i_\beta}(x)$ is a smooth vector field on $X_\beta$, it follows that $V_1^{m+1}, \ldots, V_m^{m+1}$ are  rugose vector fields on $U_{m+1}$ satisfying  $df_x(V_i^{m}(x))=e_i$ and $\langle \nabla \rho(x), V_i^{m}(x) \rangle =0$, for any $x\in U_m$.

\noindent{\bf Third step}. Let $m+1 < d \leq \dim X$ and assume that we have constructed $m$ rugose vector fields  $V_1^{d}, \ldots, V_m^d$ on $U_d$ such that 
\[ d_x f(V_i^d(x))=e_i \mbox{ and } \langle \nabla \rho(x), V_i^d(x) \rangle =0, \mbox{ for any $x\in U_d$}.\] 

We will construct $V_1^{d+1}, \ldots, V_m^{d+1}$ separately on each stratum $X_\alpha$, with $\alpha\in I_{d+1}\setminus I_d$. Thus, we may assume without loss of generality that $I_{d+1}\setminus I_{d}=\{\alpha\}$. 

We fix $i\in \{ 1, \ldots, m \}$. It follows by Theorem \ref{VTB}  that there exists a rugose vector field $\widehat{V}_i^{d+1}$ on $U_{d+1}$, which  extends $V_i^{d}$, such that $\langle \nabla \rho(x), 
\widehat V_i^{d+1}(x)\rangle =0$, for any $x\in U_{d+1}.$  

Since  $\{V_{1_{\alpha}}(x), \dots, V_{m_{\alpha}}(x)\}$ is a basis of 
\[  Z_\alpha =\{ w \in T_xX_{\alpha } \cap T_xS^{n-1} \mid \langle w,u \rangle =0, \mbox{ for all } u\in (\Ker d_{x}(f_{|_{X_\alpha}}))\cap  T_xS^{n-1} \},\] 
we can write for each $x\ in U_{d+1} $ the following: 
\begin{equation}
\widehat{V}_i^{d+1}(x)=\sum_{j=1}^{m}a_j(x)V_{j_\alpha}(x)+ G_{\alpha}(x),
\end{equation}
where $G_{\alpha}(x) \in \{ z\in (T_xX_\alpha\cap T_xS^{n-1}) \mid \langle z,   V_{j_\alpha}(x) \rangle =0, \mbox{ for } j=1, \ldots, m\}$. We define ${V}_i^{d+1}$ on $U_{d+1}$ as follows: 
\begin{equation}\label{eq:defV}
{V}_i^{d+1}(x) := \left\lbrace
\begin{array}{ll}
V_{i_\alpha}(x)+ G_{\alpha}(x), \textup{  if } x\in U_{d+1}\setminus U_{d},\\
V_i^d(x), \textup{ if } x\in U_d.
\end{array}
\right.
\end{equation}
Then, we have:

\begin{lemma}\label{l:rug-mod}
 ${V}_i^{d+1}$ is a rugose vector field on $U_{d+1}$ that satisfies,  for any    $x\in U_{d+1}$, the following conditions:   
\begin{enumerate}
    \item $\langle \nabla \rho(x),  {V}_i^{d+1}(x)\rangle=0$.  

    \item $d_x f({V}_i^{d+1}(x))=e_i$. 
\end{enumerate} 
\end{lemma}
\begin{proof}
 For $x'\in U_d$ and $y\in U_{d+1}\setminus U_d$, we can write:  
 \begin{equation}\label{eq:v1}
  V_i^{d}(x')=\sum_{j=1}^{m}b_j(x',y)V_{j_\alpha}(y)+G(x', y)+ P(x', y) + S(x',y),  
 \end{equation}
where $G(x', y) \in \{ z\in (T_yX_\alpha\cap T_yS^{n-1}) \mid \langle z,   V_{j_\alpha}(y) \rangle =0, \mbox{ for } j=1, \ldots, m\}$, $P(x', y) $ is in  the complementary space of $T_yX_\alpha\cap T_yS^{n-1}$ in $T_yX_\alpha$ and $S(x',y)\in (T_yX_\alpha)^{\perp} \subset \bR^n$. 

Since $\widehat{V}_i^{d+1}$ is a rugose vector field, it follows that for each $x\in U_d$, there exist a constant $C>0$ and a neighborhood $W_x$ of $x$ such that for any $y\in W_x\cap X_\alpha$ and any $x'\in W_x\cap X_\beta$, where $X_\beta$ is a stratum  containing $x$,  we have 
\begin{equation}\label{eq:v2}
    \|\widehat{V}_i^{d+1}(y)-\widehat{V}_i^{d+1}(x')\|< C\|y-x'\|.
\end{equation}  

By \eqref{eq:defV}, \eqref{eq:v1} and \eqref{eq:v2} we have:
\begin{align*}
\|\sum_{j=1}^{m}(a_j(y)-b_j(x',y))V_{j_\alpha}(y)\| & \leq C\|x'-y\|,\\
\| G_{\alpha}(y)-G(x',y)  \| & \leq C\|x'-y\|,  \\ 
 \|P(x',y) \| & \leq C\|x'-y\|, \\ 
\|S(x',y)\| & \leq C\|x'-y\|.
\end{align*}

Shrinking $W_x$ and increasing $C$ if for necessary, we can suppose that $dF:X\rightarrow \mathcal{L}(\bR^n,\bR^m)$ defined by $dF(x)=d_xF$ is Lipschitz on $W_x$. Then, 
\[ \|d_yF-d_{x'}F\|< C\|y-x'\|.\] 

In particular, it follows that 
\[ \|d_yF((V_i^d(x'))- d_{x'}F((V_i^d(x'))\|< C\|y-x'\|\|V_i^d(x')\|.\]

Therefore, substituting the expression \eqref{eq:v1} into $d_yF((V_i^d(x'))$ in the  above inequality and using the fact that  $d_{x'}F(V_i^d(x'))=e_i$ and $G(x',y) \in \Ker d_y(f_{|X_\alpha})$, we obtain:  
\[ \|\sum_{j=1}^{m}b_j(x',y)e_j+d_yF\left(P(x',y) \right)+d_yF(S(x',y))- e_i\|\leq C\|x-y\|\|V_i^d(x')\|.\] 

Then 
\[ \|\sum_{j=1}^{m}b_j(x',y)e_j-e_i \|-\left\|d_yF\left(P(x',y)\right)\right\|-\|d_yF(S(x',y))\|\leq C\|x'-y\|\|V_i^d(x')\|.\]

We put $M:=\sup_{z\in U_d \cap W_x}\|V_i^d(z)\|$ and $N:={\sup_{z\in W_x}}\|d_zF\|$. Then we get
\begin{equation}\label{eq:last}
\|\sum_{j=1}^{m}b_j(x',y)e_j-e_i \|\leq C(2N+M)\|x'-y\|.
\end{equation} 

If $D={\sup_{z\in W_x\cap X_\alpha}}\|V_{i_\alpha}(z)\|$, it follows by \eqref{eq:defV}, \eqref{eq:v1} and \eqref{eq:last} that
\begin{align*}
\|{V}_i^{d+1}(y)-{V}_i^{d+1}(x')\|< & \, \|\sum_{j=1}^{m}b_j(x',y)V_{j_\alpha}(y) -V_{i_\alpha}(y) \|   +  \| G_{\alpha}(y) - G(x',y) \|\\
&  + \| P(x',y) \|  +  \|S(x',y)\|\\
& < (CD(2N+M) + 3C) \|x'-y\|. 
\end{align*}

Therefore  ${V}_i^{d+1}$ is rugose vector field on $U^{d+}$. Now, it  follows by definition of  ${V}_i^{d+1}$  that 
$ d_x f({V}_i^{d+1}(x))  =e_i$ and $ \langle \nabla \rho(x), {V}_i^{d+1}(x)\rangle =0,$ 
for any $x\in U_{d+1}$. The proof of the Lemma \ref{l:rug-mod} is complete. 
\end{proof}

Continuing inductively we get $m$ rugose vector fields $V_1^{\mathrm{dim}X}, \dots,V_m^{\mathrm{dim}X}$  defined on $U_{\mathrm{dim}X}$, such that for each $i=1,\dots,m$ the following two conditions hold: 
\begin{align}
  \label{eq:1}  d_x f({V}_i^{\mathrm{dim}X}(x)) & =e_i, \\
  \label{eq:2}  \ \langle \nabla \rho(x), {V}_i^{\mathrm{dim}X}(x)\rangle & =0.
\end{align} 

Now, let $\zeta_{x_0}(t)$ be a integral curve of the vector field $V_i^{\dim X}$ starting from  $x_0 \in  (f^{-1}(B) \cap U \cap f_i^{-1}(0))$. Then,  

\begin{enumerate}
\item[a)] the curve $\zeta_{x_0}(t)$ preserves the stratification since $V_i^{\dim X}$ is a rugose vector field. 

\item[b)] it follows by \eqref{eq:1} that $f(\zeta_{x_0}(t)) = te_i$  and by \eqref{eq:2}   that $\|\zeta_{x_0}(t)\| = \|x_0\|$. 
\end{enumerate}

It follows that  the domain of existence of $ \zeta_{x_0}(t)$  is the interval $]-1,1[$ and that  $\zeta_{x_0}(t) \in (f^{-1}(B) \cap U )$, for any $t \in ]-1,1[$. 

Now we can construct the diffeomorphism   trivializing the map
$f: f^{-1}(B) \cap U \to B$ 
by integrating the vector fields $V_1^{\dim X},\dots,V_m^{\dim X}$.
\end{proof}

\begin{proof}[Proof of Theorem \ref{t:main}]

 We will use the vector fields $V_1^{\dim X},\dots,V_m^{\dim X}$ constructed in the proof of Theorem \ref{t:main-ity}. Let $R$ as in the proof of Theorem \ref{t:main-ity} and let $R_1, R_2$ fixed such that $R< R_1< R_2$. 
 
From Proposition \ref{p:lift}, there exist $m$  rugose vector fields  $W_1, \ldots, W_m$ on $f^{-1}(B)$ lifting $\frac{\partial}{\partial 1}, \ldots,  \frac{\partial}{\partial m},$ respectively. 

It follows by \cite[Lemma 2.22]{Lee} that there exists a $C^{\infty}$ function $\varphi: \bR^n\to [0,1]$ such that $\varphi^{-1}(1)= \bar{B}_{R_1}$ and $\varphi^{-1}(0)= \bR^n \setminus {B_{R_2}}$. 

For $i=1, \ldots, m$, we define on $f^{-1}(U)$ the vector fields \[H_i(x):=\varphi(x) W_i(x) + (1-\varphi(x))V^{\dim X}_i(x).\] Then, the restriction of $H_i$ 
on each stratum of $\mathcal{W}$ is a smooth vector field and 
\begin{align}\label{eq:f-Hi}
d_xf(H_i(x))&=d_xf(\varphi(x) W_i(x)+(1-\varphi(x))V_i^{\mathrm{dim}X}(x))\\
\nonumber &=\varphi(x)d_xf(W_i(x))+(1-\varphi(x))d_xf(V_i^{\mathrm{dim}X}(x))\\
\nonumber &=\varphi(x)e_i+ (1-\varphi(x))e_i = e_i,   
\end{align}
for any $i=1, \ldots, m$. Moreover, we have:  

\begin{lemma}
$H_i$ is a rugose vector field  for any $i=1, \dots,m$.  
\end{lemma}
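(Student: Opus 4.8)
The goal is to show that each vector field $H_i(x) = \varphi(x)W_i(x) + (1-\varphi(x))V_i^{\dim X}(x)$ is rugose on $f^{-1}(U)$, meaning: (1) its restriction to each stratum is smooth, and (2) it satisfies the local Lipschitz estimate of Definition~\ref{defrugos}, comparing values on a stratum $X_\alpha$ with values at nearby points of $X$. The smoothness on each stratum is already noted immediately before the statement: $W_i$ and $V_i^{\dim X}$ are rugose, so their restrictions to each stratum are smooth, and $\varphi$ is $C^\infty$ on all of $\bR^n$; hence $H_i$ restricted to $X_\alpha$ is smooth. So the only real content is the rugosity estimate.

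The plan is to fix a stratum $X_\alpha$ and a point $x \in X_\alpha$, and exploit that both $W_i$ and $V_i^{\dim X}$ are rugose at $x$ relative to $X_\alpha$. Thus there are a neighborhood $W_x$ of $x$ in $\bR^n$ and a constant $C>0$ such that for all $y \in X \cap W_x$ and all $x' \in X_\alpha \cap W_x$,
\[
\|W_i(y) - W_i(x')\| \le C\|y-x'\| \quad\text{and}\quad \|V_i^{\dim X}(y) - V_i^{\dim X}(x')\| \le C\|y-x'\|.
\]
Shrinking $W_x$ if necessary, I also use that $\varphi$ is $C^\infty$, hence Lipschitz on $\overline{W_x}$ with some constant $C_\varphi$, and that $W_i$ and $V_i^{\dim X}$ are bounded on $X \cap \overline{W_x}$ by some constant $M$ (this last point uses that $W_x$ can be taken with compact closure and that a rugose function is in particular continuous on each stratum and locally bounded near $x$; if one prefers to avoid a boundedness claim across strata one can instead just bound $\|V_i^{\dim X}(x')\| \le \|V_i^{\dim X}(x)\| + C\,\mathrm{diam}(W_x)$ and similarly for $W_i$, which is legitimate since $x' \in X_\alpha$). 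Then the standard ``product rule'' estimate for a convex combination gives, writing $\psi = \varphi$,
\begin{align*}
\|H_i(y) - H_i(x')\|
&\le \psi(y)\|W_i(y)-W_i(x')\| + |\psi(y)-\psi(x')|\,\|W_i(x')\| \\
&\quad + (1-\psi(y))\|V_i^{\dim X}(y) - V_i^{\dim X}(x')\| + |\psi(y)-\psi(x')|\,\|V_i^{\dim X}(x')\| \\
&\le C\|y-x'\| + C_\varphi M\|y-x'\| + C\|y-x'\| + C_\varphi M\|y-x'\|,
\end{align*}
using $0 \le \psi \le 1$. This is $\le (2C + 2C_\varphi M)\|y-x'\|$, which is the desired estimate with the new constant $2C + 2C_\varphi M$. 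Finally, $H_i(x)$ is by construction a convex combination of $W_i(x) \in T_xX_\alpha$ and $V_i^{\dim X}(x) \in T_xX_\alpha$ (both are tangent to the stratum through $x$), so $H_i(x) \in T_xX_\alpha$, confirming the tangency requirement in the definition of a rugose vector field.

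I do not expect a genuine obstacle here: the argument is the routine observation that the class of rugose functions (and of rugose vector fields, since tangency is preserved under convex combination within a fixed tangent space) is closed under multiplication by a globally $C^\infty$, hence locally Lipschitz, scalar function and under convex combinations. The one point that deserves a line of care is the local boundedness of $W_i$ and $V_i^{\dim X}$ near $x$; as indicated above, this follows either from continuity on strata together with shrinking to a relatively compact neighborhood, or directly from the rugosity estimate applied with the base point $x$ itself. Once that is in hand, the triangle-inequality computation above completes the proof.
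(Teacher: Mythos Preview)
Your proposal is correct and follows essentially the same approach as the paper: both arguments reduce to a product-rule/triangle-inequality estimate using the rugosity of $W_i$ and $V_i^{\dim X}$, the local Lipschitz property of $\varphi$, and local boundedness of the two fields. The only cosmetic difference is that the paper first observes that outside the annulus $\overline{B}_{R_2}\setminus B_{R_1}$ the field $H_i$ coincides with one of $W_i$ or $V_i^{\dim X}$ (so rugosity is immediate there, and in particular one need not worry that $V_i^{\dim X}$ is only defined on $\bR^n\setminus\overline{B}_R$), whereas you run the estimate uniformly; your added remark on tangency is a nice explicit check that the paper leaves implicit.
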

\begin{proof}
Let $x\in f^{-1}(B)$ and let $ X_\beta$ be the stratum of $\mathcal{W}$ containing $x$. Since $W_i$ and $V_i^{\mathrm{dim}X}$ are rugose vector fields, we need to check the rugosity of $H_i$ on $f^{-1}(B) \cap (\overline{B}_{R_2}\setminus B_{R_1})$. Thus, we may assume $x \in f^{-1}(B) \cap (\overline{B}_{R_2}\setminus B_{R_1})$. 

It follows that there exist $C_1, C_2>0$ and a neighborhood $\Omega_x$ of $x$ such that for any  $x'\in X_\beta\cap \Omega_x\cap f^{-1}(B)$ and   $y\in X\cap\Omega_x\cap f^{-1}(B)$, we have:
\begin{eqnarray*}
       \|W_i(y)-W_i(x')\|\leq C_1\|y-x'\|, \\
    \|V_i^{\mathrm{dim}X}(y)-V_i^{\mathrm{dim}X}(x')\|\leq C_2\|y-x'\|.
\end{eqnarray*}

It follows that 
 \begin{align*} 
 \|H_i(y)-H_i(x')\|=&\|\varphi(y)W_i(y) + (1-\varphi(y))V_i^{\mathrm{dim}X}(y)-\varphi(x')W_i(x')- (1-\varphi(x'))V_i^{\mathrm{dim}X}(x')\| \\ 
\leq & \, \|\varphi(y)(W_i(y)-W_i(x'))\| +  \| (1-\varphi(x'))(V^{\dim X}_i(y) - V^{\dim X}_i(x')) \|  \\ 
     +& \,   \|(\varphi(y)- \varphi(x'))W_i(x')\| + \| (1-\varphi(y) -(1-\varphi(x'))) V^{\dim X}_i(y)  \|.    
 \end{align*}

We have:

• $1-\varphi$ is a smooth function and so it is  locally Lipschitz.  Without loss of generality,
assume that $1-\varphi$ is Lipschitz on $\Omega_x$ with constant $C_3$. Then, 
$|(1-\varphi(y))-(1-\varphi(x'))|\leq C_3\|y-x'\|$.

•   $\varphi$ is locally Lipschitz. So, we 
assume that  $\varphi$ is Lipschitz on $\Omega_x$ with constant $C_4$. Then
$|\varphi(y)-\varphi(x')|\leq C_4\|y-x'\|$.

• By the continuity of $W_i$ and $V_i^{\mathrm{dim}X}$, we have that these vector fields are bounded in $\Omega_x$. Thus we take $C_5=\sup_{z\in\Omega_x}\|V_i^{\mathrm{dim}X}(z)\|$ and $C_6=\sup_{z\in\Omega_x}\|W_i(z)\|$. 

Then, 
\[\|H_i(y)-H_i(x')\|\leq (C_1+ C_2 + C_4C_6 + C_3C_5)\|y-x'\|, \]
which shows that  $H_i$ is a rugose vector field. 
\end{proof}  
 Let us return to the proof of the theorem. Let $\xi_{x_0}(t)$ be a integral curve of the vector field $H_i$ starting from  $x_0 \in  (f^{-1}(B) \cap f_i^{-1}(0))$. Then,  

\begin{enumerate}
\item[a)] the curve $\xi_{x_0}(t)$ preserves the stratification since $H_i$ is a rugose vector field (see for instance \cite[Proposition 4.8]{Ve}). 

\item[b)] it follows by \eqref{eq:f-Hi} that $f(\xi_{x_0}(t)) = te_i$. 

\item[c)] by definition of $H_i$, we have $H_i(x) = V_i(x)$, for any $\|x\|> R_2$. Then, $\|\xi_{x_0}(t)\|$ does not converges to infinity as $t$ goes to $\tau$, $|\tau| < 1$. 
\end{enumerate}

It follows  that  the domain of existence of $ \xi_{x_0}(t)$  is the interval $]-1,1[$ and that  $\xi_{x_0}(t) \in (f^{-1}(B))$, for any $t \in ]-1,1[$. 

Now we can construct the diffeomorphism trivializing the map 
$f: f^{-1}(B) \to B$ 
by integrating the vector fields $H_1, \ldots, H_m$.
\end{proof}

\section*{Acknowledgments} 

The first author was partially supported by  Fapemig-Brazil Grant APQ-02085-21 and CNPq-Brazil grant 301631/2022-0. The second author was partially supported by the CAPES Grant PROEX-12113190/D. The authors are grateful to Professor Raimundo Nonato Araújo dos Santos for many helpful discussions.

\end{document}